\newcommand{\Tr}{\mathrm{Tr}}
\renewcommand{\Im}{\mathrm{Im}}
\newcommand{\Sp}{\mathrm{Sp}}
\renewcommand{\H}{\mathbb H}
\newcommand{\T}[1]{{}^t{{#1}}}
\newcommand{\A}{{\mathbb A}}
\newcommand{\Q}{{\mathbb Q}}
\newcommand{\Z}{{\mathbb Z}}
\newcommand{\R}{{\mathbb R}}
\newcommand{\C}{{\mathbb C}}
\newcommand{\bs}{\backslash}
\newcommand{\GL}{{\rm GL}}
\newcommand{\SL}{{\rm SL}}
\newcommand{\bc}{{\rm bc}}
\newcommand{\GSp}{{\rm GSp}}
\newcommand{\sq}{\mathfrak{S}}
\DeclareMathOperator{\Cl}{Cl}
\newcommand{\mat}[4]{{\setlength{\arraycolsep}{0.5mm}\left(\begin{array}{cc}#1&#2\\#3&#4\end{array}\right)}}
\newcommand{\forget}[1]{}
\newtheorem{lemma}{Lemma}[section]
\newtheorem{theorem}{Theorem}
\newtheorem{proposition}[lemma]{Proposition}
\theoremstyle{remark}
\newtheorem{remark}[lemma]{Remark}
\begin{document}

\bibliographystyle{plain}

\title[Siegel cusp forms and Fourier coefficients]{Siegel cusp forms of degree 2 are determined by their fundamental Fourier coefficients}

\author{Abhishek Saha}
\address{ETH Z\"urich -- D-MATH\\
  R\"amistrasse 101\\
  8092 Z\"urich\\
  Switzerland} \email{abhishek.saha@math.ethz.ch}

\subjclass[2000]{11F30, 11F37, 11F46, 11F50}

 \begin{abstract}
 We prove that a Siegel cusp form of degree $2$ for the full modular group is determined by its set of Fourier coefficients $a(S)$ with $4\det(S)$ ranging over odd squarefree integers. As a key step to our result, we also prove that a classical cusp form of half-integral weight and level $4N$, with $N$ odd and squarefree, is determined by its set of Fourier coefficients $a(d)$ with $d$ ranging over odd squarefree integers, a result that was previously known only for Hecke eigenforms.
 \end{abstract}

 \maketitle

\section{Introduction}
Let $S_k(\Sp_4(\Z))$ denote the space of Siegel cusp forms of degree  (genus)  $2$ and weight $k$ with respect to the full modular group $\Sp_4(\Z)$. We refer the reader to Section~\ref{siegelsec} for the precise definitions of these objects. It is well known that any $F \in S_k(\Sp_4(\Z))$ has a Fourier expansion of the form
\begin{equation}\label{fouriercoeffsiegelform}
 F(Z)=\sum_Sa(F, S)e^{2\pi i{\rm Tr}(SZ)},\qquad Z\text{ in the Siegel upper half space}.
\end{equation}
Here, the Fourier coefficients $a(F, S)$ are indexed by symmetric, semi-integral, positive-definite $2\times2$ matrices $S$, i.e., matrices of the form
$$
 S=\mat{a}{b/2}{b/2}{c},\qquad a,b,c\in\Z, \qquad a>0, \qquad D = 4ac -b^2 >0.
$$
If $\gcd(a,b,c)=1$, then $S$ is called \emph{primitive}. If $-D$ is a fundamental discriminant, then $S$ is called \emph{fundamental}. Observe that if $S$ is fundamental, then it is automatically primitive. Observe also that if $D$ is odd, then $S$ is fundamental if and only if $D$ is squarefree.

Zagier \cite[p.\ 387]{zagier81} has shown that if $F \in S_k(\Sp_4(\Z))$ is such that $a(F, S) = 0$ for all primitive $S$, then $F = 0$. This result has recently been generalized by Yamana \cite{yamana09} to cusp forms with level and of higher degree. Related results have been obtained by Breulmann--Kohnen~\cite{breul-kohn}, Heim~\cite{heim-sep}, Katsurada~\cite{katsur-sep} and Scharlau--Walling~\cite{schar-wall}.

Zagier's result tells us that elements of $S_k(\Sp_4(\Z))$ are uniquely determined by their primitive Fourier coefficients. From the point of view of  representation theory, however, it is the smaller set of fundamental Fourier coefficients that are the more interesting objects. In particular, as explained in more detail below, it is a natural and important question whether elements of $S_k(\Sp_4(\Z))$ are uniquely determined by their fundamental coefficients. In this paper, we show that this is indeed the case.

\begin{theorem}\label{maintheorem}Let $F \in S_k(\Sp_4(\Z))$ be such that $a(F, S) = 0$ for all but finitely many matrices $S$ such that $4 \det(S)$ is odd and squarefree. Then $F = 0$.\end{theorem}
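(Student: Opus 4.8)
The plan is to reduce Theorem~\ref{maintheorem} to the promised statement about classical half-integral weight forms, and then to prove that statement using the Shimura correspondence together with non-vanishing theorems for twisted central $L$-values.

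For the reduction, expand $F$ in Fourier--Jacobi series, $F(Z)=\sum_{m\geq1}\phi_m(\tau,z)e^{2\pi i m\tau'}$ with $Z=\mat{\tau}{z}{z}{\tau'}$, so that $\phi_m$ is a Jacobi cusp form of weight $k$ and index $m$ whose coefficients are $c_m(n,r)=a\!\left(F,\mat{n}{r/2}{r/2}{m}\right)$ with $4\det\mat{n}{r/2}{r/2}{m}=4mn-r^2$. Since $F$ is recovered from the $\phi_m$, it suffices to show $\phi_m=0$ for each $m$, and for fixed $m$ the hypothesis says $c_m(n,r)=0$ for all but finitely many $(n,r)$ with $4mn-r^2$ odd and squarefree. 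Via the theta decomposition, $(h_{m,\mu})_{\mu\bmod 2m}$ is a vector-valued cusp form of weight $k-\tfrac12$ for the Weil representation of $\Z/2m\Z$; the coefficients with $4mn-r^2$ odd squarefree are precisely the ``fundamental'' coefficients of the odd-index components, and the $\SL_2(\Z)$-action mixes the components enough that controlling those controls the whole vector (for $m$ odd this is clean; the remaining indices can be handled by a similar descent after stripping square factors). For $m=1$ this realizes $\phi_1$ as an element of Kohnen's plus space $S^+_{k-1/2}(\Gamma_0(4))$ for which all but finitely many coefficients $a(d)$ with $d$ odd squarefree vanish, and in general one is led to the analogous statement at level $4N$ with $N$ odd squarefree --- exactly the half-integral weight theorem.

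To prove that theorem --- that $h\in S_{k+1/2}(\Gamma_0(4N))$ with $a(h,d)=0$ for cofinitely many odd squarefree $d$ must vanish --- note first that the hypothesis is preserved by the Hecke operators $T_{p^2}$, $p\nmid 4N$. Decomposing into generalized eigenspaces for these, and then invoking the structure theory of half-integral weight forms at squarefree level (Kohnen, Ueda) together with the bad Hecke operators $U_{p^2}$ ($p\mid N$), $U_4$ and the Atkin--Lehner involutions, one reduces to the case where $h$ is a single Hecke eigenform with a fixed Shimura lift $g$, a newform. For such $h$ the Shimura relations express $a(h,dm^2)$ as an explicit multiple (involving the Hecke eigenvalues of $g$ and Kronecker symbols in $d$) of $a(h,d)$, so vanishing of $a(h,d)$ for cofinitely many squarefree odd $d$ forces $a(h,n)=0$ along cofinitely many of the corresponding square classes; combined with the action of $U_{p^2}$ and $U_4$ this would propagate to cofinitely many $n$. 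But a nonzero eigenform cannot have this property: by Waldspurger's theorem (and Kohnen--Zagier in the plus-space case) $|a(h,d)|^2$ is, up to harmless factors, the central value $L(\tfrac12,g\otimes\chi_d)$, which is nonzero for infinitely many squarefree $d$; hence $h=0$.

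The crux is that the Shimura correspondence is not injective for non-eigenforms, so the generalized eigenspace reached after the $T_{p^2}$-decomposition need not be one-dimensional and the reduction to a single newform must be done carefully using the bad operators at $2$ and at $p\mid N$. The genuinely hard point is the behaviour at the prime $2$: the coefficients $a(h,n)$ with $n\equiv2\pmod4$ lie in the kernel of every Hecke operator relating them to the odd-index coefficients, so one must show separately that, once the full Hecke structure is accounted for, a half-integral weight form of level $4N$ with $N$ squarefree is determined by its odd-index coefficients. This is where the squarefreeness of $N$ is essential, and it is the main obstacle to pushing the method further.
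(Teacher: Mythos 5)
Your overall architecture (Fourier--Jacobi expansion, then a statement about half-integral weight forms) matches the paper, but both halves of your argument have genuine gaps. First, the reduction: ``it suffices to show $\phi_m=0$ for each $m$'' sets up an impossible task, because for non-squarefree index the implication ``all fundamental coefficients of $\phi_m$ vanish $\Rightarrow\ \phi_m=0$'' is simply false. For instance, the image of a nonzero $\phi\in J_{k,m}^{\text{cusp}}$ under $U_\ell:\phi(\tau,z)\mapsto\phi(\tau,\ell z)$ lies in $J_{k,m\ell^2}^{\text{cusp}}$ and has every nonzero coefficient supported on discriminants divisible by $\ell^2$; no amount of ``descent after stripping square factors'' turns the vanishing of its fundamental coefficients into its vanishing. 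The correct logic runs the other way: the paper assumes $F\neq0$, invokes Zagier's theorem that $F$ has a nonvanishing \emph{primitive} coefficient $a(F,S)$, and uses the classical fact that a primitive binary quadratic form represents odd primes to replace $S$ by an $\SL_2(\Z)$-equivalent matrix whose lower-right entry is an odd prime $p$. This produces a single good index $p$ with $\phi_p\neq0$, for which the passage to a scalar half-integral weight form of level $4p$ (or $4p^2$ with character, when $k$ is odd) is clean. You never locate such a good index, and your argument needs one.

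Second, your proof of the half-integral weight statement does not work for non-eigenforms, and the reduction to eigenforms you sketch is exactly the step that fails. The hypothesis ``$a(h,d)=0$ for cofinitely many odd squarefree $d$'' is \emph{not} preserved by $T(p^2)$: for squarefree $d$ coprime to $p$ one has $a(T(p^2)h,d)=a(h,p^2d)+(\ast)\,a(h,d)$, and the term $a(h,p^2d)$ sits at a non-squarefree index about which the hypothesis says nothing. So you cannot project onto Hecke eigenspaces while keeping the hypothesis, and Waldspurger/Kohnen--Zagier only speaks about eigenforms; the paper makes precisely this point when explaining why the Luo--Ramakrishnan method does not apply. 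The paper's actual proof is analytic and avoids $L$-functions entirely: a Duke--Iwaniec asymptotic $\sum_{n}|\tilde a(f,nr)|^2e^{-n/X}=E_fX+O(X^{1/2+\epsilon})$ for squarefree $r$ prime to a suitable modulus, an upper bound of the shape $19^{\omega(r)}B_fX$ for $\sum_n|\tilde a(U(r^2)f,n)|^2e^{-n/X}$ obtained from the Hecke relations and the Deligne bound, a M\"obius sieve to isolate squarefree $d$, and the Serre--Stark lemmas to strip the primes dividing the level. As written, both halves of your proposal are gaps rather than details, and the second one is the genuinely hard point of the paper.
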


\begin{remark} Theorem~\ref{maintheorem} implies  that if $F \neq 0$ is an element of $S_k(\Sp_4(\Z))$, then $a(F,S) \neq 0$ for some fundamental $S$. In the case that $F$ is an eigenform for all the Hecke operators, this implies that the automorphic representation attached to $F$ has an unramified Bessel model; see~\cite[Lemma 5.1.1]{transfer}. As a result, Theorem~\ref{maintheorem} has several useful consequences. In particular, it removes the key assumption from results due to Furusawa~\cite{fur}, Pitale--Schmidt~\cite{pitsch} and the author~\cite{saha1, sah2} on the degree 8 $L$-function $L(s, F \times g)$ where $F\in S_k(\Sp_4(\Z))$ is a Hecke eigenform, and $g$ is a (classical) newform of integral weight. Furusawa~\cite{fur} discovered a remarkable integral representation for this $L$-function and used it to prove a special value result in the spirit of Deligne's conjecture. However, for his local calculations, he assumed the existence of a certain unramified Bessel model associated to $F$. To ensure that such a Bessel model exists, he made the assumption~\cite[(0.1)]{fur} that $F$ has a non-zero fundamental Fourier coefficient. Subsequent special value results in the spirit of Deligne's conjecture by Pitale--Schmidt~\cite{pitsch} and the author~\cite{ saha1, sah2} also assumed that $F$ satisfies this property. In forthcoming work of the author with Pitale and Schmidt~\cite{transfer}, a functorial transfer to $\GL(4)$ will be proved for Siegel cusp forms that satisfy this assumption. With Theorem~\ref{maintheorem}, we know now that the assumption~\cite[(0.1)]{fur} always holds, and thus all the special value and transfer theorems mentioned above hold unconditionally for eigenforms $F$ in $S_k(\Sp_4(\Z))$.
\end{remark}

Our method to prove Theorem~\ref{maintheorem} is quite different from those of the papers mentioned above. We exploit the Fourier-Jacobi expansion of $F$ and the relation between Jacobi forms and (elliptic) cusp forms of half-integral weight. In particular, we reduce Theorem~\ref{maintheorem} to an analogous result for the half-integral weight case which is of independent interest.

Let $S_{k+\frac{1}{2}}(N, \chi)$ denote the space of holomorphic cusp forms of weight $k+\frac{1}{2}$ and character $\chi$ for the group $\Gamma_0(N)$;
 we refer the reader to Section~\ref{halfintsec} for the details. We prove the following theorem.

\begin{theorem}\label{impprop}Let $N$ be a positive integer that is divisible by $4$ and $\chi : (\Z / N \Z)^\times \rightarrow \C^\times$ be a character. Write $\chi = \prod_{p | N} \chi_p$ and assume that the following conditions are satisfied:

\begin{enumerate}
\item $N$ is not divisible by $p^3$ for any prime $p$,
\item If $p$ is an \emph{odd} prime such that $p^2$ divides $N$, then $\chi_p \ne 1$.

\end{enumerate}
For some $k \ge 2$, let $f \in S_{k+\frac{1}{2}}(N, \chi)$ be such that $a(f, d) = 0$ for all but finitely many odd squarefree integers $d$. Then $f=0$.\end{theorem}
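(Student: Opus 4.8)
\emph{Strategy.} I would prove the contrapositive: if $f \in S_{k+\frac{1}{2}}(N,\chi)$ is nonzero, then $a(f,d) \ne 0$ for infinitely many odd squarefree $d$ --- which is much stronger than merely ruling out the ``all but finitely many'' hypothesis. Decompose $S_{k+\frac{1}{2}}(N,\chi)$ into simultaneous eigenspaces for the Hecke operators $T(p^2)$, $p \nmid N$, and refine to a basis $\{f_i\}$ in which each $f_i$ is attached, via the Shimura/Waldspurger correspondence, to a single cuspidal newform $g_i$ of integral weight $2k$ and level dividing $N$; the hypothesis $k \ge 2$ ensures $2k \ge 4$, so the $g_i$ are honest cusp forms on which the analytic inputs below behave well. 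Write $f = \sum_i c_i f_i$. The plan is then to study $D(s) = \sum_d |a(f,d)|^2 d^{-s}$, the sum over odd squarefree $d$: the hypothesis would force $D(s)$ to be a finite Dirichlet polynomial, hence entire and in particular regular at $s = k + \tfrac12$, whereas I claim it has a genuine pole there whenever $f \ne 0$.

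\emph{Diagonal terms.} For a single $f_i$, Waldspurger's theorem together with the explicit Kohnen--Zagier-type formulas (extended to the level and nebentypus at hand) shows that $a(f_i,d)$ is supported on $d$ lying in a finite union of ``square classes'' $\mathfrak c$ --- congruence-and-local conditions determined by the components of $f_i$ at the primes dividing $N$ and at $\infty$ --- and that on each $\mathfrak c$ one has $|a(f_i,d)|^2 = \kappa_i(\mathfrak c)\, d^{k-\frac12}\, L(\tfrac12, g_i \otimes \chi_{-d})$ for a fixed positive local constant $\kappa_i(\mathfrak c)$, the relevant central value being non-negative on the classes in question. Since the first moment $\sum_{d \le X,\, d \in \mathfrak c} d^{k-\frac12} L(\tfrac12, g_i \otimes \chi_{-d})$ is a positive multiple of $X^{k+\frac12}$ plus lower-order terms, the diagonal series $\sum_d |a(f_i,d)|^2 d^{-s}$ has a pole at $s = k+\tfrac12$ with positive residue; this, together with the non-vanishing results for quadratic twists of Waldspurger, Friedberg--Hoffstein, Ono--Skinner and others, already recovers the previously known eigenform case.

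\emph{Off-diagonal terms.} Writing $D(s) = \sum_{i,j} c_i \overline{c_j}\, D_{ij}(s)$ with $D_{ij}(s) = \sum_d a(f_i,d)\overline{a(f_j,d)} d^{-s}$, I must control the residues of the $D_{ij}$ at $s = k+\tfrac12$. When $g_i \not\cong g_j$, a Rankin--Selberg argument (the signs of the two families of coefficients being uncorrelated) should give that $D_{ij}$ is regular at $s = k + \tfrac12$, so that the residue matrix is block-diagonal with blocks indexed by the common Shimura lift $g$. Within a block, on each class $\mathfrak c$ one has $a(f_i,d) = \lambda_i(\mathfrak c)\, u_{\mathfrak c}(d)$ for $d \in \mathfrak c$, where $u_{\mathfrak c}(d)$ is common to all members of the block and $u_{\mathfrak c}(d)^2$ is a positive local constant times $d^{k-\frac12} L(\tfrac12, g \otimes \chi_{-d})$, while $\lambda_i(\mathfrak c)$ depends only on $f_i$ and $\mathfrak c$; the block of the residue matrix is then $\sum_{\mathfrak c} \rho_{\mathfrak c}\, \vec\lambda(\mathfrak c)\, \vec\lambda(\mathfrak c)^{*}$ with $\rho_{\mathfrak c} > 0$, which is positive definite exactly when the profile vectors $\vec\lambda(\mathfrak c)$ span the block. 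Establishing this spanning statement is the heart of the matter, and it is precisely here that hypotheses (1) and (2) enter: together with $4 \mid N$, they confine the local metaplectic representations at each prime dividing $N$ to the range in which the space of level-$N$ vectors, paired against the local Whittaker functionals attached to the square classes, is governed by the same combinatorics as Kohnen's newform theory, making the relevant local matrices --- hence their tensor product, the global matrix --- nonsingular. Given this, every diagonal block of the residue matrix is positive definite, so the whole matrix is, $D(s)$ has a pole at $s = k + \tfrac12$, and $f = 0$ follows.

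\emph{Main obstacle.} I expect the local analysis at the primes dividing $N$ in the last step to be by far the hardest part. The prime $2$ (carrying exactly $2^2$) and the archimedean place reproduce the classical plus-space picture, but for an odd prime $p \mid N$ one must separately handle $p \,\|\, N$ and $p^2 \,\|\, N$ with $\chi_p$ ramified, and it is exactly the requirement that these local contributions be nondegenerate --- so that distinct forms sharing a Shimura lift cannot have cancelling square-class profiles --- that forces conditions (1) and (2) and separates the general case from the previously known eigenform case.
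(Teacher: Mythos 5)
There is a genuine gap, and it sits exactly where you locate it yourself: the ``spanning'' statement for the square-class profile vectors $\vec\lambda(\mathfrak c)$ within a block of half-integral weight eigenforms sharing one Shimura lift. For a single eigenform the theorem was already known (Luo--Ramakrishnan), so the entire new content of the statement is the passage to arbitrary linear combinations; in your scheme that passage is precisely the nonsingularity of the local matrices pairing level-$N$ vectors against the Whittaker functionals attached to the square classes, and you give no argument for it beyond the expectation that hypotheses (1) and (2) make it work. Note also that even your diagonal step is not secure: the constant $\kappa_i(\mathfrak c)$ in the Waldspurger/Kohnen--Zagier formula can vanish on every square class containing odd squarefree integers --- e.g.\ for a form of the shape $h(p^2z)$, whose coefficients are supported on $p^2\mid n$ --- so positivity of the residue of $\sum_d|a(f_i,d)|^2d^{-s}$ already requires a local non-vanishing argument of the same kind. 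The off-diagonal regularity for $g_i\not\cong g_j$ is also asserted rather than proved (orthogonality kills the residue of the full Rankin--Selberg convolution, not automatically of its restriction to squarefree $d$), though that piece is likely repairable. As written, the proposal reduces the theorem to an unproved local linear-algebra statement that is at least as deep as the theorem itself.

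The paper deliberately avoids this route for exactly this reason: since the forms are not eigenforms, their squared coefficients are not central $L$-values, and there is no clean reduction to moments of $L$-functions. Instead it first uses the Serre--Stark lemmas to strip the coefficients down to $n$ coprime to the level (this is where hypotheses (1) and (2) enter, ruling out the $h(p^2z)$-type degeneracies), and then works directly with the quadratic form $f\mapsto\sum_{d\ \mathrm{odd\ sqfree}}|\tilde a(f,d)|^2e^{-d/X}$: a lower bound for the unrestricted sum from the Duke--Iwaniec Poincar\'e-series asymptotic, an upper bound for the sums over $n\equiv0\pmod{r^2}$ via the relation between $U(r^2)$ and the Hecke operators $T(p^2)$ together with the Deligne bound for the Shimura lifts, and a M\"obius sieve (restricting to $d$ coprime to a large auxiliary modulus $M$) to make the main term dominate. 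This is more elementary --- no Waldspurger formula, no local metaplectic representation theory, no non-vanishing theorems for twisted central values --- and it handles all linear combinations uniformly, which is precisely the point your approach leaves open.
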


\begin{remark}Suppose that $N/4$ is odd and squarefree, and let $f_1, f_2$  belong to the Kohnen plus-space $S^+_{k+\frac12}(N)$ which is defined to be the space of holomorphic cusp forms $f$ of weight $k + \frac12$, level $N$ and trivial character satisfying $a(f,n) = 0$ if $n \equiv (-1)^{k+1}$ or $2 \mod{4}$. Then, if $a(f_i, d) \neq 0$ with $d$ odd and squarefree, we must have that $(-1)^k d$ is a fundamental discriminant. Thus, if $a(f_1, |D|) = a(f_2, |D|)$ for $f_1, f_2$ as above and all but finitely many fundamental discriminants $D$ with $(-1)^kD >0$, then Theorem~\ref{impprop} implies that $f_1 = f_2$. In other words, half-integral weight cusp forms of level $N$ (with $N/4$ odd and squarefree) in the Kohnen plus-space are uniquely determined by their Fourier coefficients at almost all fundamental discriminants. Previously this was known only in the case when $f_1, f_2$ are both Hecke eigenforms by work of Luo and Ramakrishnan~\cite[Thm. E]{luoram}.

 Our method is very different from that of Luo--Ramakrishnan. Indeed, Luo--Ramakrishnan used Waldspurger's formula to relate the squares of Fourier coefficients of half-integral weight newforms with twisted $L$-values of integral weight  newforms, and then appealed to a theorem proved in the same paper that integral weight newforms are uniquely determined by the central $L$-values of their twists with quadratic characters. In contrast, because the cusp forms in Theorem~\ref{impprop} are not necessarily Hecke eigenforms, the squares of their Fourier coefficients do not equal central $L$-values. Thus, there seems to be no obvious way to reduce our problem to averages of $L$-functions. Instead, we estimate averages of squares of Fourier coefficients of half-integral weight forms directly,  using a technical bound obtained from relations between the Hecke operators, an asymptotic formula due to Duke and Iwaniec, and a simple sieving procedure to go from the average over all coefficients to those that are squarefree and coprime to a specified set of primes. \end{remark}

  As mentioned earlier, Theorem~\ref{maintheorem} follows from Theorem~\ref{impprop} using the Fourier-Jacobi expansion of Siegel cusp forms, and the link between Jacobi forms and classical holomorphic forms of half integer weight. Actually, for the purpose of proving Theorem~\ref{maintheorem}, we only need Theorem~\ref{impprop} when $N= 4p$ or $4p^2$ where $p$ is  an odd prime.

We have already mentioned the connection between Theorem~\ref{maintheorem} as applied to a Hecke eigenform $F$ and the existence of an unramified Bessel model for the corresponding automorphic representation. It turns out that Theorem~\ref{maintheorem}  also has an interesting connection with the central $L$-values of certain twisted $L$-functions associated with $F$.  Let us briefly explain this connection. Let $D>0$ be a positive integer such that $-D$ is a fundamental discriminant. Let $\Cl_D$ denote the ideal class group of the field $\Q(\sqrt{-D})$. There is a well-known natural isomorphism between $\Cl_D$ and the $\SL(2,\Z)$-equivalence classes of primitive semi-integral matrices with determinant equal to $D/4$.

Now let $F \in  S_k(\Sp_4(\Z))$ be a non-zero eigenfunction for all the Hecke operators and let $a(F,S)$ denote its Fourier coefficients as in~\eqref{fouriercoeffsiegelform}. For a character $\Lambda$ of $\Cl_D$, let \begin{equation}\label{eq:alf}
 a(D, \Lambda;F) =\sum_{c\in \Cl_D}\overline{\Lambda(c)}a(F,c),
\end{equation}
a quantity which is well-defined in view of the invariance of Fourier
coefficients under $\SL(2,\Z)$.

 In~\cite{boch-conj}, B\"ocherer made a
remarkable conjecture that connects the Fourier coefficients of $F$ with the central $L$-values of the twists of $F$ with quadratic characters. More refined and generalized versions of this conjecture were proposed in works of Martin, Furusawa and Shalika, see~\cite{furusawa-martin} for instance. Global evidence for this conjecture was provided in recent work of the author with Kowalski and Tsimerman~\cite{kst2}. The  question of vanishing of  $a(D, \Lambda;F)$ is also closely related to the global Gross-Prasad conjecture
  for $(SO(5), SO(2))$.

In particular, suppose that $F$ is not a Saito-Kurokawa lift. Then Theorem~\ref{maintheorem} implies that we can find $D$, $\Lambda$ as above, with $\Lambda$ non-trivial, such that $a(D, \Lambda;F) \neq 0$. Now, the conjectures referred to above imply that $L(\frac{1}{2}, \pi_F \times
\theta(\Lambda^{-1})) \neq 0 $ where $\pi_F$ is the cuspidal automorphic representation on $\GSp_4(\A)$ associated to $F$ and $\theta(\Lambda^{-1})$ is the automorphic induction of $\Lambda^{-1}$ to $\GL_2(\A)$.

Therefore, \emph{assuming those conjectures}, Theorem~\ref{maintheorem} tells us that for any non Saito-Kurokawa Hecke eigenform $F \in  S_k(\Sp_4(\Z))$, there exists an imaginary quadratic field $K=\Q(\sqrt{-D})$ and a non-trivial character $\Lambda$ of the ideal class group $\Cl_D$ of $K$, such that $L(\frac{1}{2}, \pi_F \times
\theta(\Lambda^{-1})) \neq 0$. This can be viewed as a $\GSp(4)$-analogue of results due to Rohrlich~\cite{rohrlich} and Waldspurger~\cite{waldscentral} on non-vanishing of central $L$-values for $\GL(2)$. It also has an intriguing connection — via the work of Ash and Ginzburg~\cite{ashginz} — with a certain $p$-adic $L$-function associated to
$\bc_{K/Q}\Pi(\pi_F )$; here $\Pi(\pi_F)$ denotes the functorial transfer of $\pi_F$ to $\GL_4(\A)$, the
existence of which will be proved in [13], while $\bc_{K/Q}\Pi(\pi_F )$ denotes the base-change of $\Pi(\pi_F )$ to $\GL_4(\A_K)$.

Finally, regarding possible generalizations of Theorem~\ref{maintheorem}, we refer the reader to the remarks at the end of Subsection~\ref{s:proofmaint}.
\section*{Acknowledgements}I would like to express my thanks to Emmanuel Kowalski for his help with several parts of this paper and for his feedback on the draft version.  I would also like to thank {\"O}zlem Imamoglu, Paul Nelson, Ameya Pitale, Ralf Schmidt and Ramin Takloo-Bighash for discussions and feedback. Finally, thanks are due to the referee for some suggestions which improved this paper.

\section{Siegel cusp forms of degree 2}
In this section, we will prove Theorem~\ref{maintheorem} assuming the truth of Theorem~\ref{impprop}.
\subsection{Notations}\label{siegelsec}For any commutative ring $R$ and positive integer $n$, $M_n(R)$
  denotes the ring of $n$ by $n$ matrices with entries in $R$ and $\GL_n(R)$ denotes the group of invertible matrices.  If
  $A\in M_n(R)$, we let $\T{A}$ denote its transpose. We say that a symmetric matrix in $M_n(\Z)$ is semi-integral if
  it has integral diagonal entries and half-integral off-diagonal
  ones.  Denote by $J$ the $4$ by $4$ matrix given by
$$
J =
\begin{pmatrix}
0 & I_2\\
-I_2 & 0\\
\end{pmatrix}.
$$ where $I_2$ is the identity matrix of size 2.

Define the algebraic group
  $\Sp_4$ over $\Z$ by
$$
\Sp_4(R) = \{g \in \GL_4(R) \; | \; \T{g}Jg =
  J\}
$$
for any commutative ring $R$.

The Siegel upper-half space of degree 2 is defined by
$$
\H_2 = \{ Z \in M_2(\C)\;|\;Z =\T{Z},\ \Im(Z)
  \text{ is positive definite}\}.
$$
We define
$$
 g \langle Z\rangle = (AZ+B)(CZ+D)^{-1}\qquad\text{for }
 g=\begin{pmatrix} A&B\\ C&D \end{pmatrix} \in \Sp_4(\R),\;Z\in \H_2.
$$
We let $J(g,Z) = CZ + D$ and use $i_2$ to denote the point $\begin{pmatrix}i&\\& i \end{pmatrix} \in \H_2$.

Let $\Gamma$ denote the group $\Sp_4(\Z)$. We say that $F \in S_k(\Gamma)$ if $F$ is a holomorphic function on
$\H_2$ which satisfies
$$
F(\gamma \langle Z\rangle) = \det(J(\gamma,Z))^k F(Z)
$$
for $\gamma \in \Gamma$, $Z \in \H_2$, and vanishes at the
cusps. Such an $F$ is called a Siegel cusp form of degree (genus) 2, weight $k$ and full level.

\subsection{The Fourier--Jacobi expansion} It is well-known that $F\in S_k(\Gamma)$ has a Fourier expansion $$F(Z)
=\sum_{T > 0} a(F, T) e(\Tr(TZ)),$$ where $T$ runs through all symmetric, semi-integral, positive-definite matrices of
size two.

This immediately shows that any $F \in S_k(\Gamma)$ has a ``Fourier--Jacobi expansion"

\begin{equation}\label{fjexpand}F(Z) = \sum_{m > 0} \phi_m(\tau, z) e(m \tau')\end{equation} where we write $Z= \begin{pmatrix}\tau&z\\z&\tau' \end{pmatrix}$ and for each $m>0$,
\begin{equation}\label{jacobifourier}\phi_m(\tau, z) = \sum_{\substack{n,r \in \Z \\ 4nm> r^2}}a \left(F, \mat{n}{r/2}{r/2}{m}\right) e(n \tau) e(r z) \in J_{k,m}^{\text{cusp}}. \end{equation} Here $J_{k,m}^{\text{cusp}}$ denotes the space of Jacobi cusp forms of weight $k$ and index $m$; for details see~\cite{eichzag}.

If we put $c(n,r) = a \left(F, \mat{n}{r/2}{r/2}{m}\right)$, then~\eqref{jacobifourier} becomes

$$\phi_m(\tau, z) =  \sum_{\substack{n,r \in \Z \\ 4nm> r^2}} c(n,r) e(n \tau) e(r z),$$ and this is called the Fourier expansion of the Jacobi form $\phi_m$.

\begin{lemma}\label{iwanequiv} Let $S = \mat{n}{r/2}{r/2}{m}$ with $\gcd(m,r,n) = 1$ and $4nm> r^2$. Then there exists a matrix $S' = \mat{n'}{r'/2}{r'/2}{m'}$ such that $S' = \T{A}SA$ for some $A \in \SL_2(\Z)$ and $m'$ is equal to an odd prime.
\end{lemma}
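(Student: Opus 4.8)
The statement is purely about integral binary quadratic forms: given a primitive form $S$ of discriminant $-D$ (with $D=4nm-r^2$), I want an $\SL_2(\Z)$-equivalent form whose $(2,2)$-entry is an odd prime. Since the $(2,2)$-entry of $\T{A}SA$ is exactly the value $S(c,d)$ of the quadratic form at the second column $(c,d)^t$ of $A$, and since $(c,d)^t$ ranges over all primitive (coprime) integer vectors as $A$ ranges over $\SL_2(\Z)$, the lemma is equivalent to the assertion: \emph{a primitive positive-definite integral binary quadratic form represents infinitely many odd primes} (in particular at least one). So the whole game is a prime-representation statement.

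\medskip

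\noindent\textbf{Steps.} First I would reduce to representing a prime: pick any integer $\ell$ represented primitively by $S$ with $\gcd(\ell,2D)=1$ — such $\ell$ exist, e.g. reduce $S$ and note its leading coefficient, or observe that the values $S(1,t)=n+rt+mt^2$ for varying $t$ cannot all share a common factor dividing $2D$ since $S$ is primitive. Second, the primitive representation $S(c,d)=\ell$ with $\gcd(c,d)=1$ already exhibits an $A\in\SL_2(\Z)$ (complete $(c,d)^t$ to a unimodular matrix) with $\T{A}SA=\mat{\ast}{\ast/2}{\ast/2}{\ell}$; so after this step we may assume $m=\ell$ is coprime to $2D$. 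Third — the arithmetic heart — I use the classical fact that a primitive form represents infinitely many primes: the form class lies in a genus, and by Dirichlet's theorem on primes in arithmetic progressions together with the theory of the genus (or equivalently the surjectivity of the Artin map onto the genus characters), there are infinitely many primes $p$ with $p$ represented by some form in the principal genus's coset, and one can further arrange $p$ to be represented by $S$ itself by choosing the splitting condition in the ring class field $H_{-D}$; concretely, primes $p\nmid 2D$ represented by $S$ are precisely those splitting in a suitable subfield, so they have positive density. Pick such a $p$ (odd, automatically, since $p\nmid 2D$ and $D$ even) with primitive representation $S(c,d)=p$, complete to $A\in\SL_2(\Z)$, and set $S'=\T{A}SA$.

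\medskip

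\noindent\textbf{Alternative, softer route.} If one wants to avoid invoking the full Chebotarev/ring-class-field machinery, there is a lighter argument: after reducing so that $m$ is coprime to $2D$, apply the $\SL_2(\Z)$-action generated by $\mat{1}{1}{0}{1}$, which replaces $(n,r,m)$ by $(n,r+2m,m)$ — wait, that fixes $m$ — so instead apply $\mat{0}{-1}{1}{0}$ to swap $n\leftrightarrow m$, then shear to reduce $r\bmod 2m$; iterating Gauss reduction brings $S$ to a reduced form, and one then notes the reduced form's leading coefficient $a\le\sqrt{D/3}$, and translates by $z\mapsto z+1$ in the sense $S(1,t)$... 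In fact the cleanest elementary statement is: the set $\{S(1,t):t\in\Z\}=\{mt^2+rt+n\}$ is a value of a primitive quadratic polynomial in $t$, hence by a theorem going back to work on prime values of quadratic polynomials one cannot prove it hits a prime elementarily — so the soft route does \emph{not} fully work, and one genuinely needs that \emph{primitive binary forms represent primes}, which is a theorem (not conjectural, unlike one-variable polynomials) precisely because the binary case is governed by class field theory.

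\medskip

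\noindent\textbf{Main obstacle.} The only real content is the arithmetic input that a primitive positive-definite binary quadratic form represents at least one odd prime; everything else (completing a coprime vector to an $\SL_2(\Z)$-matrix, bookkeeping the $(2,2)$-entry, arranging coprimality to $2D$) is routine. I would state this prime-representation fact as a black box with a reference to the theory of binary quadratic forms and genus theory (Cox's book, or Dirichlet--Weber), and then the proof of Lemma~\ref{iwanequiv} is two or three lines.
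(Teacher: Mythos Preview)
Your proposal is correct and is essentially the same as the paper's proof: both reduce the lemma to the classical fact (cited in the paper to Weber~\cite{weber82}) that a primitive integral binary quadratic form represents infinitely many primes, then complete the representing vector to an $\SL_2(\Z)$-matrix so that the prime appears as the $(2,2)$-entry of $\T{A}SA$. Your preliminary reduction to $m$ coprime to $2D$ is unnecessary --- since infinitely many primes are represented, at least one is odd --- but harmless.
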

\begin{proof}  It is a classical result (going back at least to Weber~\cite{weber82}) that the quadratic form $mx^2 + rxy + ny^2$ represents infinitely many primes. So, let $x_0$, $y_0$ such that $mx_0^2 + rx_0y_0 + ny_0^2$ is an odd prime. Since this implies $\gcd(x_0, y_0) = 1$, we can find integers $x_1$, $y_1$ such that $A= \mat{y_1}{y_0}{x_1}{x_0} \in \SL_2(\Z).$ Then $S' = \T{A}SA$ has the desired property.
\end{proof}

Now we can prove the following proposition.

\begin{proposition}\label{propfj}Let $F \in S_k(\Gamma)$ be non-zero with a Fourier--Jacobi expansion as in~\eqref{fjexpand}. Then there exists an odd prime $p$ such that $\phi_p \ne 0$.
\end{proposition}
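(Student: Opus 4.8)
The plan is to trace a single nonzero Fourier coefficient of $F$ down to one that occurs in $\phi_p$ for an odd prime $p$. The only tool at hand that produces a prime in the lower–right entry of a matrix is Lemma~\ref{iwanequiv}, but that lemma requires the input matrix to be primitive. So the strategy is: first locate a nonzero \emph{primitive} Fourier coefficient of $F$, then move it by a suitable element of $\SL_2(\Z)$ into the shape supplied by Lemma~\ref{iwanequiv}, and finally recognize the result as a Fourier coefficient of some $\phi_p$.

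Concretely, since $F\neq 0$, Zagier's theorem \cite{zagier81} (quoted in the introduction) guarantees a primitive, semi-integral, positive-definite matrix $S_0 = \mat{n_0}{r_0/2}{r_0/2}{m_0}$ with $\gcd(n_0,r_0,m_0)=1$ and $a(F,S_0)\neq 0$. Next I would apply Lemma~\ref{iwanequiv} to $S_0$: it produces $A\in\SL_2(\Z)$ with $\T{A}S_0A = S' = \mat{n'}{r'/2}{r'/2}{p}$, where $p$ is an odd prime. Being $\Z$-congruent to $S_0$, the matrix $S'$ is again semi-integral and positive-definite, so in particular $4n'p > (r')^2$. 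Now the invariance $a(F,\T{A}SA) = a(F,S)$ for $A\in\SL_2(\Z)$ — the same invariance under $\SL_2(\Z)$ already invoked in the introduction, coming from the modularity of $F$ under the block embedding $\SL_2(\Z)\hookrightarrow\Sp_4(\Z)$, $U\mapsto\mat{\T{U}^{-1}}{0}{0}{U}$ — gives $a(F,S') = a(F,S_0)\neq 0$. Finally, by the Fourier expansion~\eqref{jacobifourier} the number $a(F,S') = c(n',r')$ is precisely one of the Fourier coefficients of the Jacobi form $\phi_p$; since it is nonzero, $\phi_p\neq 0$, which is the assertion.

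I do not anticipate a real obstacle, since every ingredient is already available. The single point that must not be glossed over is primitivity: if one begins instead with an arbitrary nonzero coefficient $a(F,gS_1)$ with $g>1$ and applies Lemma~\ref{iwanequiv} to $S_1$, one only obtains a nonzero coefficient whose lower–right entry is $gp$, which is not prime. This is exactly why the first step must invoke Zagier's theorem rather than merely the hypothesis $F\neq 0$.
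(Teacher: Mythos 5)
Your argument is exactly the paper's proof: invoke Zagier's result to get a nonzero primitive coefficient $a(F,S_0)$, apply Lemma~\ref{iwanequiv} to move $S_0$ within its $\SL_2(\Z)$-class to a matrix whose lower-right entry is an odd prime $p$, and read off the resulting nonzero coefficient as a Fourier coefficient of $\phi_p$. Your closing remark about why primitivity is essential is a correct observation, and there is no gap.
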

\begin{proof}By~\cite[p. 387]{zagier81} and the fact that $F$ is a cusp form, we know $a(F,S) \neq 0$ for some $S = \mat{n}{r/2}{r/2}{m}$ with $\gcd(m,r,n) = 1$ and $4nm> r^2$. Since $a(F,S)$ only depends on the $\SL_2(\Z)$-equivalence class of $S$, it follows by Lemma~\ref{iwanequiv} that $a(F,S') \neq 0$ for $S' = \mat{n'}{r'/2}{r'/2}{m'}$ where $m'=p$ is an odd prime. The result follows immediately.
\end{proof}

\subsection{Proof of Theorem~\ref{maintheorem}}\label{s:proofmaint} Let $F \in S_k(\Gamma)$ be non-zero. We need to show that $a(F,S) \neq 0$ for infinitely many matrices $S$ such that $4 \det(S)$ is odd and squarefree. Let $F$ have a Fourier--Jacobi expansion as in~\eqref{fjexpand}. By Proposition~\ref{propfj}, we can find an odd prime $p$ such that the Fourier expansion of $\phi_p$ given by $$\phi_p(\tau, z) = \sum_{\substack{n,r \in \Z \\ 4np> r^2}} c(n,r) e(n \tau) e(r z)$$ has the property that $ c(n,r) \neq 0$ for some $n,r$.

We first deal with the case when $k$ is even. For $N \ge 1$, put $$a(N) = \sum_{\substack{0 \le \mu \le 2p-1 \\ \mu^2 \equiv -N \pmod{4p}}} c\left(\frac{N+\mu^2}{4p}, \mu \right) $$ and let $$h(\tau) = \sum_{N=1}^\infty a(N) e(N \tau).$$

Since $\phi_p \ne 0$, we have~\cite[Thm. 5.6]{eichzag} that $h(\tau)$ is a non-zero element of $S_{k-\frac{1}{2}}(4p)$ (see Section~\ref{halfintsec} for notations and definitions relating to half-integral weight forms). So by Theorem~\ref{impprop}, we conclude that $a(D)$  is not equal to zero for infinitely many $D$ that are odd and squarefree. For any of these $D$, there exists a $\mu$ such that  $c\left(\frac{D+\mu^2}{4p},\mu \right) = a \left(F, \mat{\frac{D+\mu^2}{4p}}{\mu/2}{\mu/2}{p}\right)$ is not equal to zero. This completes the proof for the even weight case.

Next, let $k$ be odd. In this case, if we try to define $h(\tau)$ as above it would be automatically equal to zero, so we need to twist things a bit. Let $\chi$ be any \emph{odd} Dirichlet character $\ \bmod \ p$, i.e., $\chi(-1) = -1$. Define $$a(N) =  \sum_{\substack{0 \le \mu \le 2p-1 \\ \mu^2 \equiv -N \pmod{4p}}} \chi(\mu) c\left(\frac{N+\mu^2}{4p}, \mu \right)  $$ and let $$h(\tau) = \sum_{N=1}^\infty a(N) e(N \tau).$$ Let $\epsilon_4$ be the unique Dirichlet character of conductor 4. In this case, we have~\cite[p. 70]{eichzag} that $$h(\tau) \in S_{k-\frac{1}{2}}(4p^2, \epsilon_4\chi ),$$ and $h(\tau) \ne 0$. The rest of the proof is identical to the case $k$ even above.

\begin{remark} For a detailed description of the correspondence between Jacobi modular/cusp forms of index $m$ and half-integer weight modular/cusp forms (with level depending on $m$), see~\cite[Abschnitt 4]{skoruppathesis}.
\end{remark}

\begin{remark} As is clear from the argument, we implicitly prove the following stronger result.

Let $F \in S_k(\Sp_4(\Z))$ be non-zero. Then there are infinitely many primes $p$ with the property that there are infinitely many matrices $T$ satisfying all the following: (a) $a(F, T) \neq 0$, (b) $-4 \det(S)$ is an odd fundamental discriminant, and (c) $T$ represents $p$ integrally.

\end{remark}

\begin{remark}Our method actually gives a quantitative lower bound on the number of non-vanishing fundamental Fourier coefficients; furthermore, it also works with some modifications for Siegel cusp forms with respect to the Siegel congruence subgroup $\Gamma_0^{(2)}(N)$ under some hypotheses. For the details of these extensions --- along with an application to the problem of simultaneous non-vanishing of $L$-functions --- we refer the reader to a forthcoming joint paper paper~\cite{sahaschmidt} with Ralf Schmidt.
\end{remark}

\begin{remark}It is an interesting open question whether a version of Theorem~\ref{maintheorem} is true for vector-valued Siegel cusp forms of degree 2. The main difficulty is the translation into a problem about Jacobi forms and then half-integral weight forms. One may also try to generalize Theorem~\ref{maintheorem} to the case of non-cuspidal forms (e.g. Eisenstein series). The main issue there, as pointed by the referee, is that it is difficult to distinguish the growth of Fourier coefficients of Siegel- and Klingen-Eisenstein series.
\end{remark}

\begin{remark}In a more general context, one can ask whether a cusp form of degree $n$ is determined by its Fourier coefficients $A(T)$ where $T$ corresponds to
a maximal lattice. As pointed out by the referee, this question, while beyond the methods of this paper, seems to the be the appropriate generalization.

\end{remark}

\section{Modular forms of half-integral weight}\label{halfintmainsec}
The object of this section is to prove Theorem~\ref{impprop}.
\subsection{Notations and preliminaries}\label{halfintsec}
The group $\SL_2(\R)$ acts on the upper half-plane $\H$ by $$\gamma z = \frac{az+b}{cz+d},$$ where $\gamma = \mat{a}{b}{c}{d} $ and $z=x+iy $. For a positive integer $N$, let $\Gamma_0(N)$ denote the congruence subgroup consisting of matrices $\mat{a}{b}{c}{d}$ in $\SL_2(\Z)$ such that $N$ divides $c$. For a complex number $z$, let $e(z)$ denote $e^{2\pi i z}$.

Let $\theta(z) = \sum_{n = -\infty}^\infty e(n^2 z)$ be the standard theta function on $\H$. If $A = \mat{a}{b}{c}{d} \in \Gamma_0(4)$, we have $\theta(Az) = j(A, z)\theta(z),$ where $j(A, z)$ is the so called $\theta$-multiplier. For an explicit formula for $j(A, z)$, see~\cite{Shimura3} or~\cite{Serre-Stark}.

Let $k \ge 2$ be a fixed integer. For a positive integer $N$ divisible by $4$ and a Dirichlet character $\chi\mod N$, let $S_{k+\frac{1}{2}}(N, \chi)$ denote the space of holomorphic cusp forms of weight $k+\frac{1}{2}$ and character $\chi$ for the group $\Gamma_0(N)$. In other words, a function $f : \H \rightarrow \C$ belongs to $S_{k+\frac{1}{2}}(N, \chi)$ if
\begin{enumerate}

\item $f(Az) = \chi(d) j(A, z)^{2k +1} f(z)$ for every $A = \mat{a}{b}{c}{d} \in \Gamma_0(N)$,
\item $f$ is holomorphic,
\item $f$ vanishes at the cusps.

\end{enumerate}

It is easy to check that the space $S_{k+\frac{1}{2}}(N, \chi)$ consists only of $0$ unless $\chi(-1)=1$.
We will use $S_{k+\frac{1}{2}}(N)$ to denote $S_{k+\frac{1}{2}}(N, 1)$ where $1$ denotes the trivial character.

If $\chi$ is a Dirichlet character such that the conductor of $\chi$ divides $N$, then there is a unique Dirichlet character $\chi' \bmod \ N$ attached to $\chi$. By standard abuse of notation, we will use $S_{k+\frac{1}{2}}(N, \chi)$ to denote $S_{k+\frac{1}{2}}(N, \chi')$.

Any $f \in  S_{k+\frac{1}{2}}(N, \chi)$ has the Fourier expansion

$$f(z) = \sum_{n > 0} a(f, n)e(nz).$$ We let $\tilde{a}(f,n)$ denote the ``normalized" Fourier coefficients, defined by
 $$\tilde{a}(f,n) = a(f,n)n^{ \frac14-\frac k2}.$$ For convenience, we define $$S_{k+\frac12}(*) = \bigcup_{ 4|N} \bigcup_{\chi\bmod N } S_{k+\frac12}(N, \chi).$$

For $f, g \in S_{k+\frac12}(*)$, we define the Petersson inner product $\langle f, g\rangle$ by $$\langle f, g\rangle = [\Gamma_0(4): \Gamma_0(N)]^{-1} \int_{\Gamma_0(N) \bs \H}f(z) \overline{g(z)} y^{k + \frac12} \frac{dx dy}{y^2},$$ where we choose an integer $N$ that is a multiple of $4$ and a character $\chi \bmod N$ such that both $f,g \in S_{k+\frac12}(N, \chi)$; if such a $(N, \chi)$ does not exist, we put $\langle f, g\rangle = 0.$

\subsection{Two useful lemmas}

In this subsection, we state two useful lemmas due to Serre and Stark~\cite{Serre-Stark}. These lemmas will be used frequently in the sequel.

\begin{lemma}[Lemma 4 of~\cite{Serre-Stark}]\label{lema4}Suppose that $4$ divides $N$ and $$f(z) = \sum_{n>0}a(f,n)e(nz)$$ belongs to $S_{k+\frac12}(N, \chi).$ Then $$g(z) = \sum_{\substack{n>0\\(n,p)=1}}a(f,n)e(nz)$$ belongs to $S_{k+\frac12}(Np^2, \chi).$

\end{lemma}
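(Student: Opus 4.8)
The plan is to realize the passage from $f$ to $g$ as a finite linear combination of translates of $f$ by the matrices $\left(\begin{smallmatrix} 1 & b/p \\ 0 & 1\end{smallmatrix}\right)$, $b = 0, 1, \dots, p-1$, which picks out exactly the Fourier coefficients $a(f,n)$ with $p \mid n$; then $g = f - (\text{that combination})$ retains precisely the coefficients with $(n,p) = 1$. Concretely, first I would check the Fourier-analytic identity
$$
\sum_{n > 0,\ p \mid n} a(f,n) e(nz) = \frac{1}{p} \sum_{b=0}^{p-1} f\!\left(z + \frac{b}{p}\right),
$$
which is immediate from $\frac{1}{p}\sum_{b=0}^{p-1} e(nb/p)$ being $1$ if $p \mid n$ and $0$ otherwise. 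Hence $g(z) = f(z) - \frac1p\sum_{b=0}^{p-1} f(z + b/p)$, and the whole task reduces to showing each function $z \mapsto f(z + b/p)$, and therefore $g$, lies in $S_{k+\frac12}(Np^2, \chi)$ — holomorphy and vanishing at the cusps being clear since they are preserved by $z \mapsto z + b/p$ and by finite linear combinations.

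The substantive step is the transformation law. Writing $\sigma_b = \left(\begin{smallmatrix} 1 & b/p \\ 0 & 1 \end{smallmatrix}\right) = \frac{1}{p}\left(\begin{smallmatrix} p & b \\ 0 & p \end{smallmatrix}\right)$, I would take an arbitrary $A = \left(\begin{smallmatrix} a & \beta \\ c & d \end{smallmatrix}\right) \in \Gamma_0(Np^2)$ and compute $\sigma_b A \sigma_b^{-1}$. Since $Np^2 \mid c$, one checks directly that $\sigma_b A \sigma_b^{-1} = \left(\begin{smallmatrix} a' & \beta' \\ c' & d'\end{smallmatrix}\right)$ is again an integral matrix in $\SL_2(\Z)$ with $a' \equiv a$, $d' \equiv d$, and $N \mid c'$ — in fact $c' = c/p^2 \cdot p^2 = c$ is unchanged up to the conjugation bookkeeping, so it lies in $\Gamma_0(N)$ with the same lower-right entry mod the relevant modulus, hence $\chi(d') = \chi(d)$. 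Applying the known transformation law of $f$ on $\Gamma_0(N)$ to $\sigma_b A \sigma_b^{-1}$, and then tracking the $\theta$-multiplier, gives the transformation law of $f \circ \sigma_b$ on $\Gamma_0(Np^2)$. The cocycle relation $j(\gamma_1\gamma_2, z) = j(\gamma_1, \gamma_2 z) j(\gamma_2, z)$ for the $\theta$-multiplier, together with the fact that $\theta(\sigma_b z) = \theta(z + b/p)$ differs from $\theta(z)$ only by an eighth root of unity coming from the Gauss-sum evaluation, lets one express the multiplier of $f \circ \sigma_b$ under $A$ in terms of $j(A, z)^{2k+1}$ times a root of unity; the upshot (this is the content of Serre–Stark's normalization) is that the root of unity is absorbed and one gets exactly $\chi(d) j(A,z)^{2k+1}$.

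The main obstacle I anticipate is precisely this bookkeeping with the $\theta$-multiplier under the half-integral weight slash action: one must verify that the extra factors picked up from $\theta(\sigma_b z)$ versus $\theta(z)$, and from the conjugation $\sigma_b A \sigma_b^{-1}$, cancel to leave the clean multiplier $j(A,z)^{2k+1}$, rather than a twisted version of it. This requires care with the explicit formula for $j(A,z)$ (the Kronecker-symbol and $\varepsilon_d$ factors) and with quadratic reciprocity-type identities for the Gauss sums, and is the reason the statement needs level $Np^2$ rather than $Np$ — the square ensures $\sigma_b$ normalizes the group up to the necessary congruences. Everything else — holomorphy, cuspidality, linearity of the construction — is routine and I would dispatch it in a sentence. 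I would cite~\cite{Shimura3} for the cocycle and explicit multiplier formulas used in this step.
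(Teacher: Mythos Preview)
The paper does not prove this lemma at all: it is merely quoted from Serre--Stark~\cite{Serre-Stark} as an input, so there is no ``paper's own proof'' to compare your proposal against. That said, your overall strategy---write $g = f - \frac1p\sum_{b=0}^{p-1} f(z+b/p)$ and check the transformation law by conjugating $A \in \Gamma_0(Np^2)$ by $\sigma_b$---is exactly the standard one, and is essentially how Serre and Stark themselves argue.

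There is, however, a concrete error in your handling of the multiplier. You write that ``$\theta(\sigma_b z) = \theta(z+b/p)$ differs from $\theta(z)$ only by an eighth root of unity coming from the Gauss-sum evaluation.'' This is false: $\theta(z+b/p) = \sum_n e(n^2 b/p) e(n^2 z)$ is a genuinely different function of $z$ (a theta series with a nontrivial character mod $p$), not a scalar multiple of $\theta(z)$. So you cannot extract the automorphy factor of $f\circ\sigma_b$ by comparing $\theta(\sigma_b z)$ to $\theta(z)$ in this way. The correct route is the one Serre--Stark actually take: work on the metaplectic double cover $\widetilde G$ of $\GL_2^+(\Q)$, where the half-integral weight slash operator is a genuine group action and the cocycle relation holds on the nose. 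The element $\sigma_b$, being unipotent upper-triangular, has a canonical lift with trivial automorphy factor, and then $(f|\sigma_b)|A = (f|\sigma_b A \sigma_b^{-1})|\sigma_b = \chi(d')\,f|\sigma_b$ follows immediately once you have checked (as you correctly did) that $\sigma_b A \sigma_b^{-1} \in \Gamma_0(N)$ with lower-right entry $d' \equiv d \pmod N$. The explicit Kronecker-symbol computation you anticipate is then unnecessary; the whole point of passing to the cover is to avoid it.
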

\medskip
\begin{lemma}[Lemma 7 of~\cite{Serre-Stark}]\label{lema7}Let $p$ be a prime and $\epsilon_{p}$ be the quadratic character associated to the field $\Q(\sqrt{p})$. Let $N$ be an integer divisible by 4 and  $f(z)= \sum_{n>0}a(f,n)e(nz)$ be a non-zero element of $ S_{k+\frac12}(N, \chi)$ such that $a(f,n) = 0$ for all $n$ not divisible by $p$. Then $p$ divides $N/4$, the conductor of $\chi \epsilon_p$ divides $N/4$, and $g(z) = f(z/p)$ belongs to $ S_{k+\frac12}(N/p, \chi \epsilon_p)$.

\end{lemma}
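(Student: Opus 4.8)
The plan is to reduce the vanishing condition ``$a(f,n)=0$ unless $p\mid n$'' to a statement about the Fourier expansion of a function transforming correctly under $\Gamma_0(N/p)$, essentially by rescaling the variable $z\mapsto z/p$ and tracking the resulting multiplier. First I would set $g(z) = f(z/p)$, so that $g(z) = \sum_{n>0} a(f,pn)e(nz)$ because only the indices divisible by $p$ survive; this already shows $g$ is holomorphic on $\H$ with a Fourier expansion in integral powers of $e(z)$ and, since $f$ is cuspidal, $g$ vanishes at $\infty$. The substantive point is to show $g$ lies in $S_{k+\frac12}(N/p, \chi\epsilon_p)$, and to deduce en route that $p\mid N/4$ and that the conductor of $\chi\epsilon_p$ divides $N/4$.

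The key step is the transformation law. Conjugating by the scaling matrix $\mathrm{diag}(p^{1/2}, p^{-1/2})$ (or, more cleanly, tracking how $\theta$ and the $\theta$-multiplier $j(A,z)$ behave under $z\mapsto pz$), one checks that the map $f\mapsto f(z/p)$ intertwines the weight-$(k+\frac12)$ slash action of $\Gamma_0(N)$ with that of a conjugate group. Concretely, for $A=\mat{a}{b}{c}{d}$ one has a relation of the shape $j(A, z/p)^{2k+1} = (\text{explicit }p\text{-factor})\cdot j(A', z)^{2k+1}$ for a suitable $A'\in\Gamma_0(N/p)$, where the $p$-factor contributes exactly the character $\epsilon_p(d)$ (this is where the quadratic character of $\Q(\sqrt p)$ enters, via quadratic reciprocity applied to the Gauss-sum/Legendre-symbol ingredients of the $\theta$-multiplier). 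One must be a little careful: the naive conjugate of $\Gamma_0(N)$ by the scaling matrix is not contained in $\SL_2(\Z)$ unless $p\mid N/4$ in the relevant entry, and demanding that $g$ actually be modular for a congruence subgroup of $\SL_2(\Z)$ forces the divisibility conditions $p\mid N/4$ and $\mathrm{cond}(\chi\epsilon_p)\mid N/4$. So rather than assuming these divisibilities, I would derive them as a consequence of the modularity of the nonzero form $g$: if they failed, the transformation law could not hold on the full group $\Gamma_0(N/p)$ while keeping $g\neq 0$.

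I expect the main obstacle to be the bookkeeping of the $\theta$-multiplier under the scaling $z\mapsto z/p$ — in particular, getting the Gauss-sum factors to collapse precisely to $\epsilon_p(d)$ rather than to some other quadratic character, and handling the prime $p=2$ (where $\epsilon_2$ and the $2$-adic subtleties of $j(A,z)$ require separate care) versus odd $p$. A clean way to sidestep some of the classical-multiplier gymnastics is to pass to the metaplectic/adelic picture: the rescaling corresponds to an intertwiner between genuine automorphic forms on the metaplectic cover, and the character twist by $\epsilon_p$ is exactly the local Weil-index discrepancy at $p$; but since the paper works classically, I would instead just cite the explicit formula for $j(A,z)$ from \cite{Shimura3} or \cite{Serre-Stark} and grind the odd-$p$ and $p=2$ cases, which is routine though unpleasant. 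The remaining assertions — holomorphy, cuspidality of $g$, and the fact that the conductor statement is equivalent to $\chi\epsilon_p$ being a well-defined character mod $N/p$ — then follow immediately from the Fourier expansion of $g$ and the transformation law.
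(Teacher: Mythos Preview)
The paper does not actually prove this lemma: it is stated without proof and attributed directly to Serre--Stark~\cite{Serre-Stark}, so there is no ``paper's own proof'' against which to compare your proposal. Your sketch is a reasonable outline of the argument that appears in the original Serre--Stark paper --- rescale by $z\mapsto z/p$, track the $\theta$-multiplier under conjugation by $\mathrm{diag}(p^{1/2},p^{-1/2})$, and read off both the character twist $\epsilon_p$ and the divisibility constraints --- so in that sense you have correctly reconstructed the cited result rather than anything the present paper contributes.

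One small comment on your logical order: you propose to \emph{deduce} $p\mid N/4$ from the modularity of $g$ on $\Gamma_0(N/p)$, arguing that otherwise the transformation law would fail. In Serre--Stark the argument runs the other way: the condition $a(f,n)=0$ for $p\nmid n$ is first used directly (via an operator identity, essentially $f = V_p U_p f$ in their notation) to force $p\mid N/4$, and only then is the transformation law for $g$ verified on $\Gamma_0(N/p)$. Your version is not wrong, but it is circular as stated --- you cannot appeal to ``the modularity of the nonzero form $g$'' on $\Gamma_0(N/p)$ before you have established that $g$ is modular there, which is precisely what depends on $p\mid N/4$. The cleaner route is to extract the divisibility from the vanishing hypothesis first, then check the transformation law.
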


\subsection{Some estimates on averages of squares of Fourier coefficients} Throughout the rest of Section~\ref{halfintmainsec}, we fix an integer $k \ge 2$.

The following theorem of Duke and Iwaniec~\cite{dukeiwan} will be important for us. Note that the form in which we state the result here differs slightly from their presentation; this is due to making explicit constants implicit, etc.

\begin{theorem}[Duke--Iwaniec]\label{dukeiwansharp}Let $N$ be a positive integer that is divisible by $4$ and $\chi$ a Dirichlet character $\ \bmod \ N$. Then, for any $f \in S_{k + \frac12}(N, \chi)$, $f \ne 0$, there exists a positive squarefree integer $N_f$ divisible by all primes dividing $N$ and a positive real number $E_f >0$ such that the following holds.

 For $r$ any squarefree integer coprime to $N_f$, we have $$\sum_{n > 0} |\tilde{a}(f, nr)|^2 e^{-n/X} = E_f X + O(X^{\frac12 + \epsilon})$$ where the implied constant depends on $f$, $r$ and $\epsilon$.
\end{theorem}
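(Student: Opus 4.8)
The plan is to reduce the statement to the shape of the asymptotic actually proved by Duke and Iwaniec, which concerns a smoothed sum $\sum_n |\tilde a(f,n)|^2 e^{-n/X}$ for a \emph{fixed} cusp form of half-integral weight, via a Rankin--Selberg / Dirichlet-series argument. First I would recall that for $f \in S_{k+\frac12}(N,\chi)$, $f \ne 0$, the Rankin--Selberg convolution $\sum_n |a(f,n)|^2 n^{-s}$ (equivalently $\sum_n |\tilde a(f,n)|^2 n^{-s-2k+\frac12 + \tfrac12}$, up to bookkeeping of the normalization $\tilde a(f,n) = a(f,n) n^{\frac14 - \frac k2}$) admits meromorphic continuation and has a simple pole at the right edge $s = k + \frac12$ (i.e.\ the edge corresponding to the normalized variable being $1$), with residue a positive multiple of $\langle f,f\rangle > 0$; this is exactly the classical unfolding of the Rankin--Selberg integral against a real-analytic Eisenstein series of half-integral weight, and the positivity of the leading term is what produces the constant $E_f > 0$. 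Applying a standard Mellin/contour argument to $\frac{1}{2\pi i}\int \Gamma(w) X^w \, (\text{Rankin--Selberg series shifted by } w)\, dw$ — i.e.\ multiplying by the Mellin transform of $e^{-n/X}$ — and moving the contour past the pole picks up the main term $E_f X$, while the convexity (or subconvexity, but convexity suffices here with the exponent $\frac12 + \epsilon$) bound for the Rankin--Selberg $L$-function on the critical line controls the shifted integral and yields the error $O(X^{\frac12+\epsilon})$. This is precisely the content of Duke--Iwaniec~\cite{dukeiwan}; the only thing to check is that their hypotheses cover arbitrary level $N$ (divisible by $4$) and arbitrary nebentypus $\chi$, and that the error exponent they obtain is indeed $\frac12+\epsilon$ after translating to the normalized coefficients.

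Next I would handle the coprimality-and-squarefreeness twist: the theorem is stated not for $\sum_n |\tilde a(f,n)|^2$ but for $\sum_n |\tilde a(f,nr)|^2$ with $r$ squarefree and coprime to a suitable $N_f$. The clean way to produce this is to apply the already-established unsmoothed-form asymptotic to an \emph{auxiliary} cusp form. Specifically, fix $r$ squarefree coprime to $N$; iterating Lemma~\ref{lema4} (Lemma 4 of Serre--Stark), the function $g(z) = \sum_{(n,r)=1} a(f,n) e(nz)$ lies in $S_{k+\frac12}(Nr^2,\chi)$, and then the "shifted" form $z \mapsto \sum_n a(f, nr) e(nz)$ can be obtained from $f$ (via the $U_r$ and $V_r$ type operators, or directly by selecting the arithmetic progression and rescaling) as an element of $S_{k+\frac12}(N', \chi')$ for an explicit level $N'$ and character $\chi'$ depending on $r$. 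Applying the Rankin--Selberg asymptotic established in the first paragraph to \emph{this} cusp form gives $\sum_n |\tilde a(f,nr)|^2 e^{-n/X} = E_{f,r} X + O(X^{\frac12+\epsilon})$ with $E_{f,r} > 0$ because the shifted form is non-zero (its $n=1$-ish coefficients are those of $f$). One must then argue that the leading constant is in fact \emph{independent of $r$}, i.e.\ equals a single $E_f$; this follows from multiplicativity-type relations among the $|\tilde a(f,nr)|^2$ coming from the Hecke operators at primes dividing $r$ — for an eigenform $|\tilde a(f,np)|^2$ relates to $|\tilde a(f,n)|^2$ with a density-zero correction, and for a general cusp form one decomposes into eigenforms (for the good Hecke operators away from $N_f$) to get the same conclusion. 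The set $N_f$ of "bad" primes is exactly the set one must exclude so that these Hecke relations are clean; it is squarefree, contains all primes dividing $N$, and depends only on $f$.

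The main obstacle, I expect, is \emph{not} the analytic input — the Rankin--Selberg machinery and the convexity bound are standard — but rather the bookkeeping needed to (i) pin down that the leading constant $E_f$ is genuinely the same for all admissible $r$, and (ii) identify the correct set $N_f$ of excluded primes and verify that with $r$ coprime to $N_f$ the level/character of the shifted form stays under control so that Lemma~\ref{lema4} and the Hecke relations apply without degenerate behavior. There is also a subtlety that for a non-eigenform the Rankin--Selberg series $\sum |\tilde a(f,n)|^2 n^{-s}$ need not have an Euler product, so the "$r$-independence of $E_f$" must be extracted purely from the additive/linear structure of Hecke operators on the finite-dimensional space $S_{k+\frac12}(N,\chi)$ rather than from multiplicativity of coefficients; handling this cleanly — essentially by noting $E_f$ equals (a normalizing constant times) $\|f\|^2$ and that the shifted form has the same "norm density" after accounting for the $U_r/V_r$ normalization — is the delicate point. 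Everything else is routine once these two normalization issues are dispatched.
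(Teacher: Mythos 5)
Your overall strategy (Rankin--Selberg convolution plus a Mellin/contour argument) is not the one the paper uses, and it breaks down at exactly the point you yourself flag as delicate: the $r$-independence of the leading constant $E_f$. In your setup the main term of $\sum_n|\tilde a(f,nr)|^2e^{-n/X}$ would be a constant times the Petersson norm of the auxiliary form realizing $z\mapsto\sum_n a(f,nr)e(nz)$, and you propose to show this is independent of $r$ via ``multiplicativity-type relations among the $|\tilde a(f,nr)|^2$ coming from the Hecke operators at primes dividing $r$.'' But in half-integral weight there is no Hecke operator $T(p)$, only $T(p^2)$: a coefficient $a(f,np)$ with $p$ exactly dividing $np$ is not determined by $a(f,n)$ and eigenvalue data. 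Writing $m=tu^2$ with $t$ squarefree, the coefficients $a(f,t)$ at distinct squarefree $t$ are (by Waldspurger) essentially independent central $L$-values of quadratic twists of the Shimura lift, so the proportion of the Rankin--Selberg mass carried by multiples of a prime $p\nmid N$ cannot be extracted from the finite-dimensional Hecke action on $S_{k+\frac12}(N,\chi)$; decomposing into eigenforms does not help for the same reason. Since the fact that $E_f$ is literally the same constant for every admissible $r$ is precisely what Proposition~\ref{lowerbound} needs (to collapse $\sum_{d\mid M_0}\mu(d)E_g/d$ into $\frac{\phi(M_0)}{M_0}E_g$), this is a genuine gap at the heart of the statement, not a normalization issue to be ``dispatched.''

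The paper's proof is short and structurally different: it writes $f=\sum_{i=1}^b P_{m_i}$ as a finite linear combination of Poincar\'e series, defines $N_f$ to be the product of the primes dividing $N$ or dividing one of the indices $m_i$, and then invokes Lemmas 4 and 5 of Duke--Iwaniec, whose proofs obtain both the main term and its $r$-independence from the explicit Fourier expansion of Poincar\'e series (a delta term plus Kloosterman/Sali\'e sums) rather than from Rankin--Selberg theory or Hecke relations. This also supplies the construction of $N_f$ that your proposal leaves unspecified: it is determined by the Poincar\'e indices occurring in the decomposition of $f$, and the coprimality of $r$ to these indices is what kills the diagonal terms that would otherwise make the main term depend on $r$.
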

\begin{proof} Note that we can write $f$ as a linear combination of the Poincare series $P_m$ of level $N$, character $\chi$ and weight $k + \frac12$; here $m$ varies over the positive integers. Suppose we have $$f = \sum_{i=1}^b P_{m_i}.$$ Let $N(f)$ be the set of primes that divide $N$ or divide $m_i$ for some $1 \le i \le b$. We define $N_f$ to be the product of all the primes in $N(f)$. The result now follows from~\cite[Lemmas 4 and 5]{dukeiwan}.
\end{proof}
 \begin{remark}An extension of the Duke--Iwaniec method to the case when $r$ is no longer squarefree seems to involve certain technical dificulties (but if accomplished would lead to a quicker proof of Theorem~\ref{impprop}). We will not attempt this extension here, but will instead use a different argument involving Hecke operators to bypass it; see Lemma~\ref{ur2lemma} below.
 \end{remark}

\begin{proposition}\label{lowerbound}Let $N$ be a positive integer that is divisible by $4$ and $\chi$ a Dirichlet character $\ \bmod \ N$. Let $f \in S_{k + \frac12}(N, \chi)$ and let $N_f$ be as in Theorem~\ref{dukeiwansharp}.  Suppose that $a(f,n)$ equals 0 whenever $n$ and $N$ have a common prime factor. Then there exists a positive real number $D_f>0$ with the following property.

For any squarefree integer $M$  divisible by $N_f$, there exists an integer $X_{f, M}$  such that $$\sum_{\substack{n > 0\\ (n, M) = 1}} |\tilde{a}(f, n)|^2 e^{-n/X}> \frac{\phi(M)}{M}D_f X$$ for all $X \ge X_{f, M}$.
 \end{proposition}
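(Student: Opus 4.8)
The plan is to combine the Duke--Iwaniec asymptotic of Theorem~\ref{dukeiwansharp} with an inclusion-exclusion over the prime divisors of $M$. Since $f$ is assumed to have $a(f,n) = 0$ whenever $(n,N)>1$, the sum $\sum_{(n,M)=1}|\tilde a(f,n)|^2 e^{-n/X}$ only ``loses'' the contribution of those $n$ divisible by a prime $q \mid M$ with $q \nmid N$. The key point is that for each such prime $q$, writing $n = q^j m$ with $(m,q)=1$, one can try to estimate $\sum_{j \ge 1}\sum_{(m,q)=1}|\tilde a(f, q^j m)|^2 e^{-q^j m/X}$ from above; but since Theorem~\ref{dukeiwansharp} only handles \emph{squarefree} $r$, I will only directly control the $j=1$ term via $\sum_{n>0}|\tilde a(f, qn)|^2 e^{-n/X} = E_f X + O(X^{1/2+\epsilon})$, and I must find another way to discard the higher powers of $q$.

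Concretely, I would first enlarge $M$ harmlessly: replacing $M$ by $M N_f$ changes $\phi(M)/M$ only by a fixed positive factor, so without loss of generality $N_f \mid M$ and every prime $q \mid M$ satisfies either $q \mid N$ (in which case $n$ divisible by $q$ contributes nothing) or $q \mid N_f \mid M$. Then I write, by inclusion-exclusion over the squarefree divisors $e$ of $M/\gcd(M, N^\infty)$ (equivalently over squarefree $e \mid M$ built from primes not dividing $N$),
\[
\sum_{\substack{n>0\\(n,M)=1}}|\tilde a(f,n)|^2 e^{-n/X} \;\ge\; \sum_{n>0}|\tilde a(f,n)|^2 e^{-n/X} \;-\; \sum_{q \mid M,\ q\nmid N}\ \sum_{\substack{n>0\\ q \mid n}}|\tilde a(f,n)|^2 e^{-n/X},
\]
which is a crude one-sided bound but enough. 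The main term $\sum_{n>0}|\tilde a(f,n)|^2 e^{-n/X} = E_f X + O(X^{1/2+\epsilon})$ comes from Theorem~\ref{dukeiwansharp} with $r=1$. For each prime $q \mid M$ with $q \nmid N$, I split $\{q \mid n\}$ according to the exact power of $q$: the $q \| n$ part is exactly $\sum_{(m,q)=1}|\tilde a(f,qm)|^2 e^{-qm/X}$, which is at most $\sum_{m>0}|\tilde a(f,qm)|^2 e^{-qm/X} \le \sum_{m>0}|\tilde a(f,qm)|^2 e^{-m/X} = E_f X + O(X^{1/2+\epsilon})$ by Theorem~\ref{dukeiwansharp} with $r=q$ (replacing $X$ by $X/q$ only improves the bound since $e^{-qm/X}\le e^{-m/X}$; one can also just use $X$). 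The tail $q^2 \mid n$ is handled by the trivial-but-crucial observation that $\tilde a(f,q^2 m) = \tilde a(g, m')$-type relations — or more simply, that $\sum_{q^2 \mid n}|\tilde a(f,n)|^2 e^{-n/X} \le \sum_{n>0}|\tilde a(f,n)|^2 e^{-n/(X/q^2)}$-style majorization combined with the $r=1$ asymptotic at scale $X/q^2$ — contributes at most $(E_f/q^2) X + O(X^{1/2+\epsilon})$, so summing the geometric series over powers of $q$ gives a contribution $\le \frac{q}{q-1}\cdot\frac{E_f}{q}X + O(X^{1/2+\epsilon})$ roughly, i.e.\ at most $\frac{C_f}{q}X$ for $X$ large. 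Summing over the finitely many $q\mid M$ (a number depending only on $M$) and comparing with $E_f X$, one gets $\sum_{(n,M)=1}|\tilde a(f,n)|^2 e^{-n/X} \ge E_f X \prod_{q\mid M, q\nmid N}(1 - O(1/q)) + O_M(X^{1/2+\epsilon})$, and after possibly passing to a sub-product and absorbing constants this is $\ge \frac{\phi(M)}{M} D_f X$ for a suitable $D_f>0$ and all $X \ge X_{f,M}$.

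The step I expect to be the main obstacle is the treatment of the higher prime powers $q^j \mid n$ with $j \ge 2$, precisely because Theorem~\ref{dukeiwansharp} is stated only for squarefree shifts $r$. The honest fix is not to estimate $\sum_{q^j\mid n}|\tilde a(f,n)|^2 e^{-n/X}$ via Duke--Iwaniec at all, but to use the Hecke-operator relations advertised in the remark after Theorem~\ref{dukeiwansharp} (Lemma~\ref{ur2lemma}, ``below'') to bound $\tilde a(f, q^j m)$ in terms of $\tilde a(\text{something}, m)$ for forms in a fixed finite-dimensional space, thereby reducing back to the $r$-squarefree (indeed $r=1$) case. If one wants to avoid forward-referencing Lemma~\ref{ur2lemma}, an alternative is to note that the full sum $\sum_{n>0}|\tilde a(f,n)|^2 e^{-n/X}$ is $E_f X + O(X^{1/2+\epsilon})$ while $\sum_{n>0, q\mid n}|\tilde a(f,n)|^2 e^{-n/X} = \sum_{n>0}|\tilde a(f,qn)|^2 e^{-qn/X} \le \sum_{n>0}|\tilde a(f,qn)|^2 e^{-n/X}$, and the latter — over \emph{all} $n$, not just $(n,q)=1$ — still requires controlling $\tilde a(f, q\cdot qm)$, so the circularity is real and genuinely needs the Hecke-operator input. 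Thus the cleanest route is: enlarge $M$ so $N_f \mid M$; apply inclusion-exclusion; bound each ``$q \mid n$'' contribution by a geometric-type series in $1/q$ using Duke--Iwaniec for the squarefree shifts together with the $U_{q^2}$-Hecke bound of Lemma~\ref{ur2lemma} for the rest; and finally choose $D_f = \tfrac12 E_f$ (say) and $X_{f,M}$ large enough that the $O_M(X^{1/2+\epsilon})$ error and the $\prod(1-O(1/q))$ correction are dominated by the $\phi(M)/M$ factor.
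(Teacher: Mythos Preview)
Your central worry about ``higher prime powers $q^j\mid n$'' stems from a misreading of Theorem~\ref{dukeiwansharp}. The Duke--Iwaniec asymptotic
\[
\sum_{n>0}|\tilde a(f,nr)|^2 e^{-n/X}=E_f X+O(X^{1/2+\epsilon})
\]
is a sum over \emph{all} $n>0$, with no coprimality condition between $n$ and $r$. Consequently the plain M\"obius identity
\[
\sum_{\substack{n>0\\(n,M_0)=1}}|\tilde a(g,n)|^2 e^{-n/X}
=\sum_{d\mid M_0}\mu(d)\sum_{m>0}|\tilde a(g,dm)|^2 e^{-dm/X}
\]
already does the job: each inner sum is exactly of Duke--Iwaniec type with squarefree shift $d$ and $X$ replaced by $X/d$, giving $(E_g/d)X+o(X)$, and summing over $d\mid M_0$ yields $\frac{\phi(M_0)}{M_0}E_g X+o(X)$ on the nose. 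There is no need to stratify by the exact power of $q$ dividing $n$, your ``majorization'' $\sum_{q^2\mid n}|\tilde a(f,n)|^2 e^{-n/X}\le\sum_n|\tilde a(f,n)|^2 e^{-n/(X/q^2)}$ is simply false (the coefficients $\tilde a(f,q^2m)$ and $\tilde a(f,m)$ are unrelated), and no appeal to the Hecke bound of Lemma~\ref{ur2lemma} is required. Your crude one-sided inequality $\sum_{(n,M)=1}\ge\sum_n-\sum_{q\mid M}\sum_{q\mid n}$ is also too weak to produce the factor $\phi(M)/M$; indeed the right side can be negative when $M$ has several small prime factors.

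The actual subtlety, which you skip over, lies elsewhere. You need the M\"obius shifts $d$ to be coprime to $N_f$, but $M$ is \emph{divisible} by $N_f$, and $N_f$ may contain primes $p\nmid N$ for which neither the hypothesis kills $a(f,pm)$ nor Theorem~\ref{dukeiwansharp} permits the shift $r=p$. The paper resolves this by first passing from $f$ to
\[
g(z)=\sum_{(n,N_f)=1}a(f,n)e(nz)\in S_{k+\frac12}(NN_f^2,\chi)
\]
via Lemma~\ref{lema4}, so that the M\"obius need only run over $M_0=M/N_f$, whose divisors are automatically coprime to $N_f$. The nontrivial content of the proof is showing $g\neq 0$: one removes the primes of $N_f$ not dividing $N$ one at a time and uses Lemma~\ref{lema7} to derive a contradiction if the form ever dies. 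Once $g\neq 0$, the M\"obius computation above goes through, and any $D_f<E_g\cdot N_f/\phi(N_f)$ works.
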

\begin{proof}Let $$g(z) = \sum_{\substack {n > 0\\ (n, N_f) = 1}}a(f, n) e(nz).$$ By Lemma~\ref{lema4}, we know that $g \in S_{k + \frac12}(NN_f^2, \chi).$ We next show that $g \neq 0$. Indeed, let $p_1, p_2,\ldots,p_t$ be the distinct primes dividing $N_f/(N, N_f)$. Let $g_0=f$, and for $1 \le i \le t$, define $$ g_i(z) = \sum_{\substack{ n > 0\\ (n, p_i) = 1}}a(g_{i-1}, n) e(nz).$$
Then by the assumption of the proposition and Lemma~\ref{lema4}, we have $g_0 =f$, $g_t = g$ and for each  $1 \le u \le t$, $g_u \in S_{k + \frac12}(N\cdot(p_1p_2\ldots p_{u})^2, \chi).$
Suppose for some $1 \le u \le t$, we have $g_u = 0$, $g_{u-1} \neq 0$. Then $a(g_{u-1}, n) = 0$ unless $p_u$ divides $n$. By Lemma~\ref{lema7}, this implies that $p_u$ divides $N(p_1p_2\ldots p_{u-1})^2/4$, a contradiction.

Thus we have $g_u \neq 0$ for $1 \le u \le t$. In particular $g \neq 0$. Let $M_0=M/N_f$. So $M_0$ is a squarefree integer coprime to $N_f$. Let $\mu(n)$ denote the Mobius function.  Using Theorem~\ref{dukeiwansharp}, we have \begin{align*}\sum_{\substack{n >0 \\ (n, M) = 1}} |\tilde{a}(f, n)|^2 e^{-\frac{n}X} &= \sum_{\substack{n >0 \\ (n, M_0) = 1}} |\tilde{a}(g, n)|^2e^{-\frac{n}X}\\&= \sum_{d|M_0}\mu(d)\sum_{n \ge 1} |\tilde{a}(g, nd)|^2e^{-\frac{nd}X}   \\&=\sum_{d|M_0}\frac{\mu(d)}{d}E_g X + o(X)  \\ &= \frac{\phi(M_0)}{M_0}E_g X + o(X). \end{align*} Note that $E_g >0$ is a constant depending on $g$ (or equivalently, on $f$). Now we can select $D_f$ to be any positive quantity smaller than $E_g \frac{N_f}{\phi(N_f)}$; the result follows immediately.
\end{proof}

The next proposition complements the last one by giving an uniform upper bound on the average of squares of Fourier coefficients.

\begin{proposition}\label{upperbound}Let $N$ be a positive integer that is divisible by $4$ and $\chi$ a Dirichlet character $\ \bmod \ N$. There exists a constant $C_{N, \chi}$, depending only on $N$ and $\chi$,  such that for all $f \in S_{k + \frac12}(N, \chi)$, we have $$\sum_{n > 0} |\tilde{a}(f, n)|^2 e^{-n/X} < C_{N,\chi} \langle f, f\rangle X$$ for all $X >0$.
\end{proposition}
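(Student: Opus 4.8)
The plan is to bound the left-hand side by expanding $f$ in an orthogonal (or at least spanning) basis and invoking the classical asymptotic for the second moment of Fourier coefficients of a single half-integral weight cusp form, then controlling the cross terms. More precisely, fix $N$ (divisible by $4$) and $\chi$, and fix an orthogonal basis $f_1,\dots,f_r$ of $S_{k+\frac12}(N,\chi)$ with respect to the Petersson inner product $\langle\,\cdot\,,\,\cdot\,\rangle$ normalized as in the excerpt. Write $f = \sum_{j=1}^r c_j f_j$, so that $\langle f,f\rangle = \sum_j |c_j|^2 \langle f_j, f_j\rangle$. By the Cauchy--Schwarz inequality applied to the inner sum over $n$,
$$
\sum_{n>0}|\tilde a(f,n)|^2 e^{-n/X} \le r \sum_{j=1}^r |c_j|^2 \sum_{n>0}|\tilde a(f_j,n)|^2 e^{-n/X}.
$$
So it suffices to prove the estimate for each fixed basis element $f_j$, with an implied constant that may depend on $f_j$ (hence on $N$, $\chi$) but not on $X$, and then absorb the finitely many factors $r/\langle f_j,f_j\rangle$ and $\max_j(\cdot)$ into $C_{N,\chi}$, using $|c_j|^2 \le \langle f,f\rangle/\langle f_j,f_j\rangle$.

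For a single cusp form $g \in S_{k+\frac12}(N,\chi)$, I would appeal to the Rankin--Selberg method: the Dirichlet series $\sum_{n>0}|a(g,n)|^2 n^{-s-k+\frac12} = \sum_{n>0}|\tilde a(g,n)|^2 n^{-s}$ extends to a meromorphic function with a simple pole at $s=1$ coming from the pole of the real-analytic Eisenstein series $E(z,s)$ at $s=1$, the residue being a positive multiple of $\langle g,g\rangle$ (this is exactly the content behind the constant $E_f$ appearing in Theorem~\ref{dukeiwansharp}; note $E_f$ there is, up to an absolute constant depending only on $N$, proportional to $\langle f,f\rangle$, which can be read off from the Rankin--Selberg unfolding). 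Then a standard Tauberian argument --- or directly, a contour shift for the Laplace-transformed sum $\sum_{n>0}|\tilde a(g,n)|^2 e^{-n/X} = \frac{1}{2\pi i}\int_{(2)} \Gamma(s) X^s \big(\sum_n |\tilde a(g,n)|^2 n^{-s}\big)\,ds$ --- gives $\sum_{n>0}|\tilde a(g,n)|^2 e^{-n/X} = c_g X + O_g(X^{1/2+\epsilon})$ with $c_g = \kappa_N \langle g,g\rangle$ for a constant $\kappa_N$ depending only on $N$ (the error term just needs to be $o(X)$, or even $O(X)$ with a controlled constant, for the upper bound). In particular $\sum_{n>0}|\tilde a(g,n)|^2 e^{-n/X} \le (\kappa_N + 1)\langle g,g\rangle X$ for all $X$ large enough depending on $g$; for small $X$ the sum is trivially bounded, and since there are only finitely many $g=f_j$ in play, one gets a uniform such bound.

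Assembling: with $C_{N,\chi}$ chosen larger than $r \cdot (\max_j \langle f_j,f_j\rangle^{-1}) \cdot \max_j\big(\sup_{X>0} \frac{1}{X\langle f_j,f_j\rangle}\sum_{n>0}|\tilde a(f_j,n)|^2 e^{-n/X}\big)$ --- which is finite by the previous paragraph --- we obtain
$$
\sum_{n>0}|\tilde a(f,n)|^2 e^{-n/X} \le r\sum_{j}|c_j|^2 \sum_{n>0}|\tilde a(f_j,n)|^2 e^{-n/X} < C_{N,\chi}\langle f,f\rangle X
$$
for all $X>0$, which is the claim.

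\textbf{Main obstacle.} The only genuinely nontrivial input is the Rankin--Selberg statement for a single half-integral weight cusp form --- that the symmetric-square-type Dirichlet series has a pole at $s=1$ with residue a \emph{positive} multiple of $\langle g,g\rangle$, with the proportionality constant $\kappa_N$ uniform over all $g\in S_{k+\frac12}(N,\chi)$. This is classical (it underlies Duke--Iwaniec's Lemma~5, cited in Theorem~\ref{dukeiwansharp}), and in fact the cleanest route is probably to extract it directly from Theorem~\ref{dukeiwansharp} with $r=1$: that theorem already gives the asymptotic with main term $E_f X$, and one checks via the Rankin--Selberg unfolding that $E_f$ equals $\langle f,f\rangle$ times a constant depending only on $N$ (and the normalization conventions). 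The remaining work --- Cauchy--Schwarz over the basis, bounding $|c_j|$ by $\langle f,f\rangle$, and handling small $X$ --- is entirely routine. A minor point to be careful about is that the Petersson normalization in the excerpt carries the index factor $[\Gamma_0(4):\Gamma_0(N)]^{-1}$, but since $N$ is fixed this is harmless and gets absorbed into $C_{N,\chi}$.
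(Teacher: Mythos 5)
Your proposal is correct and follows essentially the same route as the paper: expand $f$ in an orthogonal basis of $S_{k+\frac12}(N,\chi)$, apply Cauchy--Schwarz, and invoke Theorem~\ref{dukeiwansharp} with $r=1$ for each basis element to get a per-form bound $\le C_i X$ valid for all $X>0$. The extra Rankin--Selberg discussion is not needed (the paper simply takes the constants $C_i$ from the Duke--Iwaniec asymptotic without identifying $E_f$ with a multiple of $\langle f,f\rangle$), but it does no harm.
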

\begin{proof}Let $f_1,\ldots f_j$ be an orthonormal basis of $S_{k + \frac12}(N, \chi)$. By Theorem~\ref{dukeiwansharp}, there exist constants $C_i$ for $1 \le i \le j$ such that  $$\sum_{n >0} |\tilde{a}(f_i, n)|^2 e^{-n/X}< C _iX$$ for all $X > 0$. Put $C_{N, \chi}= C_1 + C_2 + \ldots C_j$. By the Cauchy-Schwartz inequality, it follows that $$\sum_{n > 0} |\tilde{a}(f, n)|^2 e^{-n/X} < C_{N, \chi} \langle f, f\rangle X$$ for all $X >0$.
\end{proof}

Note that the above bound works for all $X$ and not just sufficiently large ones. We wrap up this subsection with the following elementary lemma that will be used later.

\begin{lemma}\label{easylemma} Let $y$ be any positive real number. We have $$\frac{-1 + \prod_{p >T, \ p \text{ prime }}\left(1 + \frac{y}{p^2} \right) }{\prod_{p \le T, \ p \text{ prime }}\left(1 - \frac{1}{p} \right)} \rightarrow 0$$ as $T \rightarrow \infty$.
\end{lemma}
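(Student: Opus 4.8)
The plan is to show separately that the numerator $-1 + \prod_{p > T}(1 + y/p^2)$ tends to $0$ as $T \to \infty$, and that the denominator $\prod_{p \le T}(1 - 1/p)$ does not decay so fast as to spoil the quotient; in fact I expect the numerator to go to zero like $1/T$ (up to logarithmic factors) while the denominator decays only like $1/\log T$ by Mertens' theorem, so the ratio still tends to zero. First I would bound the numerator. Using $1 + t \le e^t$ we get $\prod_{p > T}(1 + y/p^2) \le \exp\bigl(y \sum_{p > T} p^{-2}\bigr) \le \exp\bigl(y \sum_{n > T} n^{-2}\bigr)$, and $\sum_{n > T} n^{-2} \le 1/T$ (or more crudely $\le 2/T$), so the product is at most $e^{y/T}$. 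Hence the numerator is at most $e^{y/T} - 1$, which is $O_y(1/T)$ as $T \to \infty$ since $e^{y/T} - 1 \le (y/T) e^{y/T} \le (y/T) e^{y}$ for $T \ge 1$.

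Next I would bound the denominator from below. By Mertens' third theorem, $\prod_{p \le T}(1 - 1/p) \sim e^{-\gamma}/\log T$; in particular there is an absolute constant $c > 0$ such that $\prod_{p \le T}(1 - 1/p) \ge c/\log T$ for all $T \ge 2$. (One does not even need the precise asymptotic: the elementary bound $\prod_{p \le T}(1-1/p)^{-1} \ge \sum_{n \le T} 1/n \ge \log T$ already gives $\prod_{p\le T}(1-1/p) \le 1/\log T$, but we want the lower bound, so I would invoke Mertens, or alternatively note $\prod_{p\le T}(1-1/p)^{-1} = \prod_{p \le T}(1 + 1/p + 1/p^2 + \cdots) \le \prod_{p \le T}\exp(1/(p-1)) \le \exp\bigl(C + \sum_{p \le T} 1/p\bigr) \ll \log T$ by Mertens' second theorem, giving the lower bound on the denominator.)

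Combining the two, the quotient in question is bounded above by $\bigl(e^{y/T} - 1\bigr) \cdot \frac{\log T}{c} \le \frac{y e^{y}}{c} \cdot \frac{\log T}{T}$, which tends to $0$ as $T \to \infty$. Since the quotient is manifestly nonnegative (each factor in the numerator product exceeds $1$, so the numerator is positive, and the denominator is positive), this proves the claimed limit. The only mildly nontrivial input is the lower bound on $\prod_{p \le T}(1-1/p)$, i.e.\ Mertens' theorem; everything else is a one-line estimate with $1 + t \le e^t$ and $\sum_{n > T} n^{-2} \le 1/T$. I do not anticipate any real obstacle here; the main point is simply to make sure the polynomial-in-$1/T$ decay of the numerator beats the merely logarithmic growth of $1/\prod_{p \le T}(1-1/p)$.
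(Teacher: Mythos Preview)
Your proof is correct and follows essentially the same approach as the paper: bound the numerator via $1+t\le e^t$ and $\sum_{p>T}p^{-2}\le 1/T$ to get $O_y(1/T)$, then lower-bound the denominator and observe the quotient tends to zero. The only cosmetic difference is that the paper uses the cruder bound $\prod_{p\le T}(1-1/p)\gg_\epsilon T^{-\epsilon}$ on the denominator, whereas you invoke Mertens' theorem for the sharper $\gg 1/\log T$; either suffices.
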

\begin{proof}The numerator is bounded above by $-1 + e^{\sum_{p>T}\frac{y}{p^2}}$, which in turn is bounded above by $-1 + e^{\frac{y}{T}}$, which is asymptotically $\frac{y}{T}$. As for the denominator, $$\prod_{p \le T, \ p \text{ prime }}\left(1 - \frac{1}{p} \right) \gg_\epsilon \frac{1}{T^\epsilon}$$ for any $\epsilon>0$. The result follows.
\end{proof}

\subsection{Hecke operators and resulting bounds}
We begin by recalling a few operators that act on spaces of half-integral weight forms. First, for all primes $p$ coprime to $N$ there exist Hecke operators $T(p^2)$ acting on the space $S_{k+\frac12}(N, \chi)$; see~\cite[Thm. 1.7]{Shimura3}.

Next, for any odd squarefree integer $r$, let $U(r^2)$ be the operator on  $S_{k+\frac12}(N, \chi)$ defined by \begin{equation}\label{defur2}U(r^2)f = r^{\frac12 - k}\sum_{n >0 }a(f,r^2n)e(nz).\end{equation} It is known~\cite[Prop. 1.5]{Shimura3} that $U(r^2)$ takes $S_{k + \frac12}(N, \chi)$ to $S_{k + \frac12}(Nr/(N,r), \chi)$. Note also that we have\begin{equation}\label{defur3}\tilde{a}(U(r^2)f, n) = \tilde{a}(f, r^2 n).\end{equation}

The point of this subsection is to prove the following proposition.

\begin{proposition}\label{keyprop}Let $N$ be a positive integer that is divisible by $4$ and $\chi$ a Dirichlet character $\ \bmod \ N$. Let $f \in S_{k + \frac12}(N, \chi)$, $f \ne 0$, and suppose that $a(f,n) = 0$ unless $(n,N) = 1$. Then there are infinitely many odd squarefree integers $d$ such that $a(f,d) \neq 0$.

\end{proposition}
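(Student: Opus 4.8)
The strategy is to argue by contradiction. Suppose $f \in S_{k+\frac12}(N,\chi)$ is non-zero with $a(f,n)=0$ unless $(n,N)=1$, but $a(f,d)=0$ for all but finitely many odd squarefree $d$. The plan is to combine the lower bound of Proposition~\ref{lowerbound} with the upper bound of Proposition~\ref{upperbound}, after using the $U(r^2)$-operators to control the contribution of the non-squarefree $n$ to the average $\sum_n |\tilde a(f,n)|^2 e^{-n/X}$. The key point is that, under our hypothesis, only finitely many \emph{squarefree} $n$ contribute, so that almost all the mass in the average of Proposition~\ref{lowerbound} is concentrated on $n$ divisible by $p^2$ for some prime $p$; but each such contribution can be pushed into a form $U(p^2)$-related to $f$ and bounded by Proposition~\ref{upperbound}, and by taking the modulus $M$ in Proposition~\ref{lowerbound} to be divisible by all small primes we make the total such contribution too small to sustain the lower bound. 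Lemma~\ref{easylemma} will be exactly what is needed to make the bookkeeping over primes work out.

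More precisely, I would proceed as follows. First, fix the squarefree integer $N_f$ of Theorem~\ref{dukeiwansharp} and the constant $D_f>0$ of Proposition~\ref{lowerbound}. For a parameter $T$ (to be sent to infinity), let $M$ be the product of $N_f$ together with all primes $p\le T$ not already dividing $N_f$; this $M$ is odd-part-aside a valid choice in Proposition~\ref{lowerbound}, giving $\sum_{(n,M)=1}|\tilde a(f,n)|^2 e^{-n/X} > \tfrac{\phi(M)}{M} D_f X$ for $X$ large. Next, decompose the \emph{full} sum $\sum_{n>0}|\tilde a(f,n)|^2 e^{-n/X}$ according to the largest squarefree divisor: write each $n$ uniquely as $n = s^2 m$ with $m$ squarefree — more usefully, for each squarefree $n_0$ coprime to $M$ and each prime $p$, the terms with $p^2\mid n$ feed, via~\eqref{defur3}, into $\sum_m |\tilde a(U(p^2)f, m)|^2$. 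Since by~\eqref{defur2} each $U(p^2)f$ lies in $S_{k+\frac12}(Np/(N,p),\chi)$, Proposition~\ref{upperbound} bounds its exponential average by $C\,\langle U(p^2)f, U(p^2)f\rangle\,X$, and iterating $U$ over squarefree products of primes $> T$ yields a geometric-type series whose total is controlled by $\prod_{p>T}(1 + y/p^2)$ for a suitable constant $y$ depending on $f$, $N$, $\chi$. Combining: the mass on $n$ with a squared prime factor $>T$ dividing it is at most $\big(\!-1 + \prod_{p>T}(1+y/p^2)\big)\langle f,f\rangle X$ plus lower-order terms, while the hypothesis that only finitely many squarefree $d$ contribute means the mass on $(n,M)=1$ is, up to $o(X)$, entirely of this type (every such $n$ either is one of the finitely many exceptional squarefree integers, or is divisible by $p^2$ for some prime $p$, necessarily $p>T$ since $(n,M)=1$).

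Putting the two inequalities together gives
$$
\frac{\phi(M)}{M} D_f X \;<\; \Big(-1 + \prod_{p>T}\big(1 + y/p^2\big)\Big)\langle f, f\rangle\, X + o(X)
$$
for all sufficiently large $X$, hence $\tfrac{\phi(M)}{M} D_f \le \big(-1+\prod_{p>T}(1+y/p^2)\big)\langle f,f\rangle$. Dividing by $\phi(M)/M = \prod_{p\le T, p\nmid N_f}(1-1/p) \cdot (\text{factor for }N_f)$ and letting $T\to\infty$, Lemma~\ref{easylemma} forces the right-hand side to tend to $0$, contradicting $D_f>0$ and $f\ne 0$. The main obstacle — and the reason the $U(r^2)$-detour is needed rather than a direct application of Duke--Iwaniec — is controlling the contribution of the \emph{non-squarefree} $n$: Theorem~\ref{dukeiwansharp} only gives an asymptotic with squarefree multiplier $r$, so one must route each $p^2\mid n$ term through $U(p^2)f$ and invoke the \emph{uniform} (in $f$) upper bound of Proposition~\ref{upperbound} to sum these contributions over all relevant primes without the constants blowing up; getting this sum into the shape $\prod_{p>T}(1+y/p^2)$ with a single constant $y$, and checking that the iteration of $U$-operators does not leave the ambient space $S_{k+\frac12}(*)$, is the delicate part of the argument.
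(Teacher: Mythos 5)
Your overall architecture coincides with the paper's: sieve the average down to squarefree $n$ by routing the terms with square factors through the operators $U(r^2)$, play the lower bound of Proposition~\ref{lowerbound} against an upper bound on the non-squarefree contribution, and choose the modulus $M=\prod_{p\le T}p$ via Lemma~\ref{easylemma} so that the main term wins. (The paper does this as a direct lower bound on $S(M,X;f)=\sum_{d\ \mathrm{odd\ squarefree},\ (d,M)=1}|\tilde a(f,d)|^2e^{-d/X}$ using the M\"obius identity $\sum_{r^2\mid n}\mu(r)$, rather than by contradiction, but that is cosmetic.)

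However, there is a genuine gap at exactly the step you yourself flag as ``the delicate part.'' You propose to bound $\sum_n|\tilde a(U(r^2)f,n)|^2e^{-n/Y}$ by applying Proposition~\ref{upperbound} to $U(r^2)f$ in its own space. This does not work as stated: $U(r^2)f$ lies in $S_{k+\frac12}(Nr/(N,r),\chi)$, so the constant you get is $C_{Nr,\chi}$, which depends on the level $Nr$ and is in no way controlled as $r$ grows (it is defined via an orthonormal basis of the whole space of that level); moreover $\langle U(r^2)f,U(r^2)f\rangle$ is not a priori bounded in terms of $\langle f,f\rangle$ uniformly in $r$. Without a bound of the shape $\sum_n|\tilde a(U(r^2)f,n)|^2e^{-n/Y}\le c^{\omega(r)}B_fY$ with $B_f$ and $c$ \emph{independent of $r$}, the tail $\sum_{r\ge2}(\cdots)X/r^2$ cannot be summed into the factor $-1+\prod_{p>T}(1+y/p^2)$ that Lemma~\ref{easylemma} needs. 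The paper supplies this uniformity by a separate Hecke-theoretic argument (Lemmas~\ref{ur2lemma} and~\ref{lemmaheckebd}): decompose $f$ into eigenforms of the $T(p^2)$, use the commutation of $U(r_1^2)$ with $T(p^2)$ together with the recursion $\tilde a(U(p^2)g,n)=\tilde a(g,n)\bigl(\lambda_{p^2}+\epsilon_p/\sqrt{p}\bigr)-\chi(p)^2\,\tilde a(g,n/p^2)$ and Deligne's bound $|\lambda_{p^2}|\le2$ (via the Shimura correspondence), so that the average for $U(r^2)h$ is bounded by $19^{\omega(r)}$ times the average for $h$ at the \emph{original} level $N$, where Proposition~\ref{upperbound} applies with a single constant $C_{N,\chi}$. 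This input is essential and is missing from your plan; identifying the difficulty is not the same as resolving it.
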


Let $\omega(r)$ denotes the number of distinct primes dividing $r$. We need the next two Lemmas for the proof of Proposition~\ref{keyprop}.

\begin{lemma}\label{ur2lemma}Let $h \in S_{k + \frac12}(N, \chi)$ be an eigenfunction for the Hecke operators $T(p^2)$ for all $p \nmid N$. Suppose that $\langle h, h\rangle = 1$.  Let $r$ be a squarefree integer coprime to $N$. Then for all $Y > 0$, we have $$\sum_{n > 0}|\tilde{a}(U(r^2)h , n)|^2 e^{-n/Y}\le 19^{\omega(r)} C_{N, \chi}Y$$ where $C_{N, \chi}$ is as in Proposition~\ref{upperbound}.
\end{lemma}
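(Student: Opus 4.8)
The plan is to control the action of $U(r^2)$ on a Hecke eigenfunction $h$ by expressing $U(r^2)$ in terms of the Hecke operators $T(p^2)$ for $p \mid r$, for which the eigenvalues of $h$ can be bounded effectively, and then feeding the resulting bound on the Fourier coefficients of $U(r^2)h$ into the uniform upper bound of Proposition~\ref{upperbound}. Since $r$ is squarefree and coprime to $N$, it suffices by multiplicativity to handle a single prime $p \mid r$ and then iterate over the $\omega(r)$ primes dividing $r$; the constant $19$ should emerge as a crude numerical bound on the multiplicative factor picked up at each prime.

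First I would recall the relation (a standard consequence of the defining recursions for half-integral weight Hecke operators, cf.~\cite{Shimura3}) expressing $a(f, p^2 n)$ in terms of $a(T(p^2)f, n)$, $a(f,n)$ and $a(f, n/p^2)$, with coefficients that are polynomial in $p$, $p^k$ and the relevant character value; rewriting this in terms of the normalized coefficients $\tilde a$ and the normalized operator yields $\tilde a(U(p^2)f, n) = \tilde a(f, p^2 n)$ as a linear combination, with bounded coefficients, of $\tilde a(T(p^2)f, n)$, $\tilde a(f,n)$ and $\tilde a(f, n/p^2)$. Applying this with $f = h$ and using that $h$ is a $T(p^2)$-eigenform, $T(p^2)h = \lambda_p h$, we get $\tilde a(U(p^2)h, n)$ as a linear combination of $\tilde a(h, n)$ and $\tilde a(h, n/p^2)$ with coefficients controlled by $|\lambda_p|$ and an absolute constant. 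The key input here is an \emph{a priori} bound on $|\lambda_p|$: Shimura's bound (or even the trivial bound coming from the action of $T(p^2)$ on a bounded orthonormal basis, combined with $\langle h,h\rangle = 1$) gives $|\lambda_p| \ll p^{k-\frac12}$ after normalization, hence a bounded contribution — the point is that in the $\tilde a$-normalization the eigenvalue is $O(1)$ in $p$ up to an absolute constant, which is what makes the $19^{\omega(r)}$ shape possible rather than something growing in $r$.

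Then I would estimate $\sum_n |\tilde a(U(p^2)h, n)|^2 e^{-n/Y}$ by expanding the square: the cross terms and diagonal terms are each bounded, via Cauchy–Schwarz on the coefficients and the elementary inequality $|a+b|^2 \le 2|a|^2 + 2|b|^2$ applied a bounded number of times, by an absolute constant times $\sum_n |\tilde a(h, n)|^2 e^{-n/Y'}$ for a comparable parameter $Y'$ (here one uses $\sum_n |\tilde a(h, n/p^2)|^2 e^{-n/Y} = \sum_m |\tilde a(h,m)|^2 e^{-p^2 m /Y} \le \sum_m |\tilde a(h,m)|^2 e^{-m/Y}$, so replacing $Y$ by a larger value only helps). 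By Proposition~\ref{upperbound} with $\langle h,h\rangle = 1$, this is $\le 19 \, C_{N,\chi} Y$ — after one checks that the absolute constant from the expansion, together with the bound on $|\lambda_p|$, is at most $19$ (this is where I would be slightly generous in the arithmetic to land on a clean round number). Iterating the one-prime bound over the $\omega(r)$ prime divisors of $r$, using that the $U(p^2)$ for distinct $p \mid r$ commute and that each application of one $U(p^2)$ multiplies the relevant weighted sum by at most $19\,C_{N,\chi}/C_{N,\chi} = 19$ while preserving the Petersson norm up to the same bookkeeping, gives the claimed $19^{\omega(r)} C_{N,\chi} Y$.

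The main obstacle I anticipate is the bookkeeping in the iteration: applying Proposition~\ref{upperbound} at each stage requires that $U(p_1^2)\cdots U(p_j^2)h$ (partial products) again lie in a space of the form $S_{k+\frac12}(N', \chi)$ with controlled norm, and the level changes from $N$ to $N r/(N,r) = Nr$ along the way. One must either check that $C_{N', \chi}$ can be taken uniform in these intermediate levels (it cannot, in general, without care), or — more cleanly — avoid re-applying Proposition~\ref{upperbound} to the partial products and instead carry the estimate through symbolically in terms of the \emph{original} $h$, tracking how the linear combination of shifted coefficients of $h$ compounds under repeated application of the coefficient identity. I would favour the latter route: expand $\tilde a(U(r^2)h, n) = \tilde a(h, r^2 n)$ directly via the multiplicative structure into a sum of $\le 3^{\omega(r)}$ terms of the form $c \cdot \tilde a(h, n \prod_{p \in T} p^{-2})$ over subsets $T$ of the primes dividing $r$, with each $|c|$ bounded by a product of the per-prime coefficient bounds, then apply Cauchy–Schwarz once and Proposition~\ref{upperbound} once to $h$ itself; the factor $19^{\omega(r)}$ then absorbs both the $3^{\omega(r)}$ combinatorial blow-up and the per-prime constants.
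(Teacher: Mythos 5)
Your proposal follows essentially the same route as the paper: reduce to one prime at a time via Shimura's relation expressing $\tilde a(f,p^2n)$ in terms of $\tilde a(T(p^2)f,n)$, $\tilde a(f,n)$ and $\tilde a(f,n/p^2)$, use an eigenvalue bound to control the resulting coefficients, apply $|a+b|^2\le 2|a|^2+2|b|^2$ together with $e^{-p^2m/Y}\le e^{-m/Y}$, and invoke Proposition~\ref{upperbound} only for the original $h$. The paper organizes this as an induction on $\omega(r)$, where the inductive hypothesis is the lemma itself for $r/p_t$ applied to $U((r/p_t)^2)h$ at two values of the parameter $Y$; this is precisely the ``clean route'' you favour, since the constant $C_{N,\chi}$ never sees the intermediate levels, and it yields the per-prime factor $2\cdot 9+2/p_t^2\le 19$ exactly. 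Your alternative of fully expanding $\tilde a(h,r^2n)$ and doing one global Cauchy--Schwarz over the $2^{\omega(r)}$ resulting terms would give a somewhat larger constant than $19$ (which is harmless for the application but does not literally match the stated bound).

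One point needs correction: the parenthetical claim that ``even the trivial bound coming from the action of $T(p^2)$ on a bounded orthonormal basis'' yields a normalized eigenvalue that is $O(1)$ with an absolute constant is false. The trivial (operator-norm, or Hecke-type) bound for $T(p^2)$ gives a normalized eigenvalue growing like a positive power of $p$; this would replace $19^{\omega(r)}$ by something like $\prod_{p\mid r}(Cp^{\delta})$, and the downstream sieve in the proof of Proposition~\ref{keyprop} (which needs $\sum_{r}19^{\omega(r)}r^{-2}$ to converge and to be made small) would then fail. The input here is genuinely deep: the paper gets $\lambda_{p^2}\in[-2,2]$ from the Shimura correspondence combined with Deligne's bound for the integral-weight lift. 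So your primary citation --- ``Shimura's bound,'' read as Ramanujan--Deligne transported through the Shimura lift --- is the right and necessary one, and the fallback should be discarded.
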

\begin{proof}Set $t = \omega(r)$. We use induction on $t$. The case $t=0$ (i.e., $r=1$) is just Proposition~\ref{upperbound}. Now let $t \ge 1$ and $r = p_1p_2\ldots p_t$. Put $r_1 = p_1p_2\ldots p_{t-1}$ and $g= U(r_1^2) h$. Then $g \in S_{k + \frac12}(Nr_1, \chi)$ is an eigenfunction of $T(p_t^2)$. This is because the operators $T(p_t^2)$ and $U(r_1^2)$ commute. By~\cite[Thm. 1.7]{Shimura3}, we have \begin{equation}\label{hecke}\tilde{a}(U( p_t^2)g, n) = \tilde{a}(g , n)(\lambda_{p_t^2} + \frac{\epsilon_{p_t}}{\sqrt{p_t}}) - \chi(p_t)^2\tilde{a}(g , n/p_t^2),\end{equation}where $\lambda_{p_t^2} \in [-2,2]$ is the normalized Hecke eigenvalue (here we are using the Shimura correspondence combined with the  Deligne bound) for $T(p_t^2)$ acting on $g$ and $\epsilon_{p_t}$ is a complex number such that $| \epsilon_{p_t} |\in \{ 0, 1 \}$. It follows that

\begin{align*}\sum_{n > 0}|\tilde{a}(U(r^2)h , n)|^2 e^{-\frac{n}{Y}} &=\sum_{n >0}|\tilde{a}(U( p_t^2)g , n)|^2e^{-\frac{n}{Y}}\\ &  \le 2\sum_{n>0} \left( |\tilde{a}(g , n)|^2 \big(\lambda_{p_t^2} + \frac{\epsilon_{p_t}}{\sqrt{p_t}}\big)^2 + |\tilde{a}(g , n/p_t^2)|^2\right)e^{-\frac{n}{Y}} \\& \leq 18 \sum_{n > 0}|\tilde{a}(g , n)|^2 e^{-\frac{n}{Y}}+ 2 \sum_{n> 0}|\tilde{a}(g , n)|^2 e^{-\frac{np_t^2}{Y}}\end{align*} where the penultimate step follows from~\eqref{hecke} and the Cauchy-Schwartz inequality, while the last step follows from the bound $(\lambda_{p_t^2} + \frac{\epsilon}{\sqrt{p_t}})^2 \le 9.$ Using the induction hypothesis, it follows that $$\sum_{n > 0}|\tilde{a}(U(r^2)h , n)|^2 e^{-n/Y} \le (19)^{t-1} C_{N, \chi}Y (18 + 2/p_t^2) \le  (19)^{t} C_{N, \chi}Y.$$ The lemma is proved.
\end{proof}

\begin{lemma}\label{lemmaheckebd} Let $f \in S_{k + \frac12}(N, \chi)$. There exists a constant $B_f$ such that for any positive squarefree integer $r$ coprime to $N$ and any $Y > 0$, we have $$\sum_{n > 0}|\tilde{a}(U(r^2) f , n)|^2 e^{-n/Y}\le 19^{\omega(r)} B_fY.$$
\end{lemma}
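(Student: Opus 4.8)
The plan is to deduce this quickly from Lemma~\ref{ur2lemma} by expanding $f$ in a basis of simultaneous Hecke eigenforms. The operators $T(p^2)$ with $p \nmid N$ form a commuting family of normal operators on the finite-dimensional Petersson inner product space $S_{k+\frac12}(N,\chi)$ (the adjoint of $T(p^2)$ with respect to the Petersson product is a scalar multiple of $T(p^2)$, by~\cite{Shimura3}), so this space admits a basis $h_1,\ldots,h_j$ consisting of simultaneous eigenfunctions for all the $T(p^2)$, $p\nmid N$; after rescaling we may assume $\langle h_i, h_i\rangle = 1$ for every $i$. Write $f = \sum_{i=1}^j c_i h_i$, and note that the scalars $c_i$ depend only on $f$.

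The key observation is that for $g \in S_{k+\frac12}(\ast)$ and $Y>0$ the quantity $\|g\|_Y := \bigl(\sum_{n>0}|\tilde a(g,n)|^2 e^{-n/Y}\bigr)^{1/2}$ is exactly the $\ell^2$-norm of the sequence $\bigl(\tilde a(g,n)\,e^{-n/(2Y)}\bigr)_{n>0}$; hence $g \mapsto \|g\|_Y$ obeys the triangle inequality, and since $U(r^2)$ acts linearly on normalized Fourier coefficients by~\eqref{defur3}, we get $\|U(r^2)f\|_Y \le \sum_{i=1}^j |c_i|\,\|U(r^2)h_i\|_Y$. Each $h_i$ is an eigenfunction of $T(p^2)$ for every $p \nmid N$ with $\langle h_i,h_i\rangle=1$, so Lemma~\ref{ur2lemma} applies and gives $\|U(r^2)h_i\|_Y^2 \le 19^{\omega(r)} C_{N,\chi}\,Y$, i.e. $\|U(r^2)h_i\|_Y \le 19^{\omega(r)/2}\sqrt{C_{N,\chi}\,Y}$.

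Combining these bounds and squaring yields
$$\sum_{n>0}|\tilde a(U(r^2)f,n)|^2 e^{-n/Y} = \|U(r^2)f\|_Y^2 \le \Bigl(\sum_{i=1}^j |c_i|\Bigr)^2 19^{\omega(r)} C_{N,\chi}\,Y,$$
valid for every $Y>0$ and every squarefree $r$ coprime to $N$, so the lemma holds with $B_f := \bigl(\sum_{i=1}^j |c_i|\bigr)^2 C_{N,\chi}$. There is no genuine obstacle here; the only points needing (routine) care are the existence of the simultaneous eigenbasis — which rests on the commutativity and normality of the $T(p^2)$ for $p\nmid N$ — and the observation that the $\ell^2$ triangle inequality may be applied to the Fourier-coefficient sequences directly, irrespective of the levels of the various forms $U(r^2)h_i$.
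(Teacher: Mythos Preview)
Your proof is correct and follows essentially the same route as the paper: expand $f$ in an orthonormal basis of simultaneous $T(p^2)$-eigenforms and combine Lemma~\ref{ur2lemma} with a Cauchy--Schwarz/Minkowski step. The only cosmetic difference is that you apply the $\ell^2$ triangle inequality to the Fourier-coefficient sequences (yielding $B_f=(\sum_i|c_i|)^2C_{N,\chi}$), whereas the paper invokes Cauchy--Schwarz directly to obtain $B_f=j\langle f,f\rangle C_{N,\chi}$; since $(\sum_i|c_i|)^2\le j\sum_i|c_i|^2=j\langle f,f\rangle$, your constant is in fact at least as sharp.
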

\begin{proof} Let $f_1,f_2, \ldots f_j$ be an orthonormal basis of $S_{k + \frac12}(N, \chi)$ such that $f_i$ is an eigenfunction for the Hecke operators $T(p^2)$ for all $p \nmid N$. We put $B_f = j \langle f, f\rangle C_{N, \chi}$. Now the proof is immediate from Lemma~\ref{ur2lemma} by writing $f$ as a linear combination of $f_i$ and using the Cauchy-Schwartz inequality.

\end{proof}

Now we can prove Proposition~\ref{keyprop}.

\begin{proof}Let $\sq$ denote the set of odd squarefree positive integers. Let $N_f$ be as in Theorem~\ref{dukeiwansharp}. For any positive squarefree integer $M$  divisible by $N_f$, define $$S(M,X;f) = \sum_{\substack{d\in \sq \\ (d, M) = 1}} |\tilde{a}(f, d)|^2 e^{-d/X}.$$

The proposition will be proved if we can show that for some suitable choice of $M$, $S(M,X;f) \rightarrow \infty$ as $X\rightarrow \infty$.

Using the identity $$\sum_{r^2 | n} \mu(r) = \begin{cases}1 & \text{ if } n \text{ is squarefree } \\ 0 & \text{ otherwise }\end{cases},$$ and~\eqref{defur3}, we can sieve to squarefree terms and get

$$S(M,X;f)  = \sum_{\substack{\ r\in \sq \\ (r, M) = 1}} \mu(r) \sum_{\substack{n >0 \\ (n, M) = 1}} |\tilde{a}(U(r^2)f, n)|^2e^{-r^2n/X} .$$

Thus, for $X \ge X_{f, M}$, we have, using Proposition~\ref{lowerbound},
\begin{equation}\label{keyineq} S(M,X;f)  >  \frac{\phi(M)}{M}D_f X - \sum_{\substack{r \ge 2, r\in \sq \\ (r, M) = 1}} \sum_{\substack{n >0 \\ (n, M) = 1}} |\tilde{a}(U(r^2)f, n)|^2e^{-r^2n/X} \end{equation}

Using Lemma~\ref{lemmaheckebd}, we get from~\eqref{keyineq} that $$S(M,X;f) >  \frac{\phi(M)}{M}D_f X - \sum_{\substack{r \ge 2, r \in \sq \\ (r, M) = 1}} 19^{\omega(r)}B_f \frac{X}{r^2}$$ for $X > X_{f, M}$. Now, we finally make a choice for $M$. Indeed we choose $$M = \prod_{p\le T, \ p \text{ prime }} p$$ to be a squarefree integer such that $M$ is divisible by $N_f$ and such that the following inequality holds $$D_f \prod_{p \le T} (1- \frac1p) > B_f \left( -1 + \prod_{p > T}(1 + \frac{19}{p^2}) \right).$$ The existence of such a $T$ is guaranteed by Lemma~\ref{easylemma}. With this choice of $M$, we see that $S(M,X;f) > A_f X$ for all sufficiently  large $X$ where $A_f$ is a positive number. This shows that $S(M,X;f) \rightarrow \infty$ as $X\rightarrow \infty$ and completes the proof of Proposition~\ref{keyprop}.

\end{proof}

\subsection{Proof of Theorem~\ref{impprop}}

Recall that $\chi = \prod_{p | N} \chi_p$ is a character whose conductor divides $N$ and such that the two conditions of Theorem~\ref{impprop} are satisfied. Let $f \in S_{k+ \frac12}(N, \chi)$ be non-zero. We need to show that there are infinitely many odd squarefree integers $d$ such that $a(f,d) \neq 0$.

Let $2=p_1, p_2,\ldots,p_t$ be the distinct primes dividing $N$. For $1 \le i \le t$, let $S_i = \{p_1, \ldots, p_i\}$. Put $g_0 = f$. We will construct a sequence of forms $g_i$, $1 \le i \le t$, such that \begin{enumerate}

\item $g_i \ne 0,$
\item $g_i \in S_{k+ \frac12}(N N_i, \chi \chi_i)$ where $N_i$ is composed of primes in $S_i$ and $\chi_i$ is a Dirichlet character whose conductor is not divisible by any prime outside $S_i$,
\item $a(g_i, n) = 0$ whenever $n$ is divisible by a prime in $S_i$,
 \item If there exist infinitely many odd squarefree integers $d$ such that $a(g_i, d) \ne 0$, then there exist infinitely many odd squarefree integers $d$ such that $a(g_{i-1}, d) \ne 0$.
\end{enumerate}

We proceed inductively. Put $g_0 = f$ and let $$g_1(z) = \sum_{\substack {n > 0\\ n \text{ odd }}}a(g_0, n) e(nz).$$ By Lemma~\ref{lema4}, we know that $g_1 \in S_{k + \frac12}(4N, \chi).$ We next show that $g_1 \neq 0$. Indeed, if $g_1 = 0$, then $a(g_0, n) = 0$ unless $n$ is even.  By Lemma~\ref{lema7}, this implies that $N/4$ is even, a contradiction. It is clear that $g_1$ satisfies the four listed properties.

 Next, suppose we have constructed $g_{i-1}$ with the listed properties. If $$\sum_{\substack {n > 0\\ (n, p_i) = 1}}a(g_{i-1}, n) e(nz) \ne 0 ,$$ then we put $$g_i(z) = \sum_{\substack {n > 0\\ (n, p_i) = 1}}a(g_{i-1}, n) e(nz).$$ By Lemma~\ref{lema4}, we know that $g_i \in S_{k + \frac12}(NN_{i-1}p_i^2, \chi \chi_{i-1})$ and clearly satisfies the listed properties.

 On the other hand if $\sum_{\substack {n > 0\\ (n, p_i) = 1}}a(g_{i-1}, n) e(nz) = 0 ,$ then $a(g_{i-1}, n) = 0$ for all $n$ not divisible by $p_i$. By Lemma~\ref{lema7}, there exists $$g_{i-1}' \in S_{k + \frac12}(NN_{i-1}/p_i, \chi \chi_{i-1} \epsilon_{p_i})$$ such that $g_{i-1}(z) = g_{i-1}'(p_iz)$; here $\epsilon_{p_i}$ is the quadratic character associated to the field $\Q(\sqrt{p_i}).$ Clearly $g_{i-1}' \ne 0$ because $g_{i-1} \neq 0$. In this case, we put $$g_i(z) = \sum_{\substack {n > 0\\ (n, p_i) = 1}}a(g_{i-1}', n) e(nz).$$

 By Lemma~\ref{lema4}, $g_i \in S_{k + \frac12}(NN_{i-1}p_i, \chi \chi_{i-1}\epsilon_{p_i}).$ We claim that $g_i \ne 0$. Indeed,  if $g_i = 0$, then $a(g_{i-1}', n) = 0$ for all $n$ not divisible by $p_i$. Again, by Lemma~\ref{lema7}, this implies that $p_i^2$ divides $N$ and that the conductor of $\chi \chi_{i-1}$ divides $NN_{i-1}/p_i^2.$ But this is impossible because $p_i$ divides the said conductor but not the quantity $NN_{i-1}/p_i^2.$

 Thus we have $g_i \neq 0$. Clearly $g_i$ satisfies the listed properties. So, inductively we have constructed the sequence of forms $g_i$, $1 \le i \le t$ with the listed properties.

 Finally put $f'= g_t$, $N' = NN_t$, $\chi' = \chi \chi_t$. Then $f' \in S_{k + \frac12}(N', \chi')$ satisfies the conditions of Proposition~\ref{keyprop}. So there are infinitely many odd squarefree integers $d$ such that $a(f',d) \neq 0$. By the last listed condition for $g_t$, the same is true of $g_0=f$. The proof of Theorem~\ref{impprop} is complete.

\bibliography{lfunction}

\end{document}